\newtheorem{thm}{Theorem}[section]
\newtheorem{theorem}[thm]{Theorem}
\newtheorem{lemma}[thm]{Lemma}
\newtheorem{proposition}[thm]{Proposition}
\newtheorem{corollary}[thm]{Corollary}
\newcommand{\beq}{\begin{equation}}
\newcommand{\eeq}{\end{equation}}
\newcommand{\beqa}{\begin{eqnarray}}
\newcommand{\eeqa}{\end{eqnarray}}
\newcommand{\beqas}{\begin{eqnarray*}}
\newcommand{\eeqas}{\end{eqnarray*}}
\newcommand{\bi}{\begin{itemize}}
\newcommand{\ei}{\end{itemize}}
\newcommand{\vgap}{\vspace{.1in}}
\newcommand{\R}{\mathbb{R}}
\newcommand{\cZ}{{\cal Z}}
\newcommand{\lam}{{\lambda}}
\newcommand{\tlam}{{\tilde \lambda}}
\newcommand{\inner}[2]{\langle #1,#2\rangle}
\newcommand{\argmin}{\mathrm{argmin}}
\newcommand{\dom}{\mathrm{dom}\,}
\newcommand{\bConv}[1]{\overline{\mbox{\rm Conv}}\,(\R^{#1})}
\newcommand{\tx}{\tilde x}
\newcommand{\tu}{\tilde u}
\newcommand{\tv}{\tilde v}
\newcommand{\tz}{\tilde z}
  \theoremstyle{plain}
\begin{document}
\global\long\def\c{\mathbb{C}}

\global\long\def\r{\mathbb{R}}

\global\long\def\R{\mathbb{R}}

\global\long\def\n{\mathbb{N}}

\global\long\def\rn{\mathbb{R}^{n}}


\global\long\def\cZ{{\cal Z}}

\global\long\def\limn{\lim\limits _{n\to\infty}}

\global\long\def\tz{\tilde{z}}

\global\long\def\tx{\tilde{x}}

\global\long\def\lam{\lambda}

\global\long\def\tlam{\tilde{\lambda}}

\title{A Doubly Accelerated Inexact Proximal Point Method for Nonconvex Composite Optimization Problems}
\author{
	Jiaming Liang
	\thanks{School of Industrial and Systems
		Engineering, Georgia Institute of
		Technology, Atlanta, GA, 30332-0205.
		(email: {\tt jiaming.liang@gatech.edu} and {\tt renato.monteiro@isye.gatech.edu}). This work
		was partially supported by NSF Grant CMMI-1300221 and ONR Grant N00014-18-1-2077.}\quad 
	Renato D.C. Monteiro \footnotemark[1]
}
\maketitle
\begin{abstract}
This paper describes and establishes the iteration-complexity of a doubly accelerated inexact proximal point (D-AIPP)
method for solving the  nonconvex composite minimization problem whose objective function is
of the form $f+h$ where $f$ is a (possibly nonconvex) differentiable function whose gradient is Lipschitz continuous and
$h$ is a closed convex function with bounded domain. D-AIPP performs two types of iterations, namely, inner and outer ones.
Its outer iterations correspond to the ones of the accelerated inexact proximal point scheme.
Its inner iterations  are the ones performed by an accelerated
composite gradient method for inexactly solving the convex proximal subproblems generated during the outer iterations.
Thus, D-AIPP employs both inner and outer accelerations.
\end{abstract}

\section{Introduction}\label{sec:int}


Our main goal in this paper is to describe and establish the iteration-complexity of a doubly accelerated inexact proximal point (D-AIPP)
method for solving the  nonconvex composite minimization problem
\begin{equation}\label{eq:ProbIntro}
\phi_*:=\min \left\{ \phi(z):=f(z) + h(z)  : z \in \R^n \right \}
\end{equation}
where $h:\R^n \to (-\infty,\infty]$ is a proper lower-semicontinuous convex function whose domain $\dom h$ is bounded,
and $f$ is a real-valued differentiable (possibly nonconvex) function on a closed convex set $ \Omega \supseteq \dom h $, whose gradient is $M$-Lipschitz continuous on $ \Omega $
and, for some $m \in (0,M]$,  satisfies
\begin{equation}\label{ineq:lowercurv}
f(u) \ge \ell_f(u;z) - \frac{m}{2}\|u-z\|^{2} 
\quad\forall z,u\in \Omega.
\end{equation}
For a given tolerance $\hat \rho>0$, the main result of this paper shows that the 
D-AIPP method obtains a pair $(\hat z, \hat v)$ satisfying
\begin{equation}\label{eq:approxOptimCond_Intro}
\hat v \in \nabla f(\hat z) + \partial h(\hat z), \quad \|\hat v\| \le \hat \rho
\end{equation} 
in at most 
\begin{equation}\label{eq:boundIntro}
\mathcal{O}\left(\frac{M^{1/2}m^{3/2}D^2}{\hat \rho^2} 
+\sqrt{\frac{M}{m}} \log \left(\frac{M+m}{m}\right) \right)
\end{equation}
accelerated gradient iterations
where 
 $D$ denotes the diameter of the domain of $h$.

From an outer perspective, D-AIPP is an accelerated inexact proximal point method reminiscent of ones presented
in the papers \cite{guler1992new,monteiro2013accelerated,salzo2012inexact} whose analyses are carried out only in the context of the convex version of \eqref{eq:ProbIntro}.
A basic ingredient of these methods and the D-AIPP method presented here is the ability of inexactly solving
proximal subproblems of the form
\begin{equation}\label{eq:penPbRegIntro}
\min \left\{ f(u) + h(u) + \frac{1}{2\lam}\|u-\tx_{k-1}\|^2 : u \in \R^n \right \}
\end{equation}
where $\tx_{k-1}$ is a prox-center obtained through a standard acceleration scheme. More specifically,
an iteration of the latter scheme (i.e., the outer one) uses an appropriate approximate solution of the above subproblem and
the one of the previous subproblem to obtain the next $\tx_k$ (see equations \ref{eq:txk} and \eqref{def:x+ FISTA}).
The (outer) iteration-complexity of this accelerated scheme in terms of $\lam$ alone can be shown to be ${\cal O}(1/\lam^2)$
so that the larger $\lam$ is chosen the better its complexity becomes.
It is well-known that when $\lam \le 1/M$, a suitable approximate solution for the above subproblem can be obtained
by exactly solving a single linearized composite minimization problem (referred to as a resolvent evaluation of $h$) of the form
\begin{equation}\label{eq:resolvent}
\min \left\{ \inner{c}{u} + h(u) + \frac{1}{2\lam}\|u\|^2 : u \in \R^n \right \}
\end{equation}
where $c \in \R^n$, namely, the one with $c=\nabla f(\tx_{k-1})-\tx_{k-1}/\lam$.
However, like the methods in \cite{carmon2018accelerated,kong2018complexity}, D-AIPP works with the potentially larger stepsize
$\lambda =1/2m$, which guarantees that the objective function of \eqref{eq:penPbRegIntro} is (actually, strongly) convex
in view of \eqref{ineq:lowercurv}.
Subproblem \eqref{eq:penPbRegIntro} with the latter choice of $\lam$ is then approximately solved 
by an accelerated composite gradient (ACG) variant for convex composite programs (see for example \cite{Attouch2016,YHe2,nesterov1983,nesterov2012gradient,tseng2008accmet})
whose iterations, in the context of D-AIPP,  are referred to as the inner ones.
D-AIPP can therefore be viewed as a doubly accelerated method since it employs acceleration schemes both in its outer iterations
and its inner iterations. 
Using the fact that an iteration of the ACG method only requires a single resolvent evaluation of $h$, the main result of this paper
shows that  the overall number of inner iterations and resolvent evaluations of $h$ required for D-AIPP to obtain
an approximate solution of \eqref{eq:ProbIntro} according to \eqref{eq:approxOptimCond_Intro} is bounded by \eqref{eq:boundIntro}.

\vgap

{\it Related works.} 
Under the assumption that domain of $h$ is bounded, \cite{nonconv_lan16} presents an ACG method applied directly to \eqref{eq:ProbIntro}
 which obtains a $\rho$-approximate solution
of \eqref{eq:ProbIntro} in
\[
{\cal O}\left( \frac{ MmD^2}{{ \rho}^2} + \left(\frac{Md_0}{\rho}\right)^{2/3}\right).
\]
The latter method can be thought as an accelerated method which approximately solves a sequence of subproblems \eqref{eq:resolvent}
with $\lambda=1/M$, which in turn can be solved with single resolvent evaluations of $h$.
We refer to this type of accelerated schemes as short-step ones due to the small magnitude of $\lambda$.
Motivated by \cite{nonconv_lan16}, other papers 
have proposed ACG methods for solving \eqref{eq:ProbIntro}
under different assumptions on the functions $g$ and $h$
(see for example \cite{carmon2018accelerated,Paquette2017,LanUniformly,kong2018complexity,Li_Lin2015,CatalystNC}).
With the exception of \cite{carmon2018accelerated,kong2018complexity}, all these ACG methods are all short-step and, as a result, its complexity in terms of 
$M$ is ${\cal O}(M)$. On the other hand, the ACG methods in \cite{carmon2018accelerated,kong2018complexity} are large-step and  their complexities in terms of $M$ only
are both ${\cal O} (\sqrt{M} \log M)$. The latter two ACG methods perform both outer and inner iterations but, in contrast to the
D-AIPP method, they only perform inner acceleration.

Finally, 
inexact proximal point methods and HPE variants of the ones studied in \cite{monteiro2010complexity,Sol-Sv:hy.ext}
for solving convex-concave saddle point problems and monotone variational inequalities,
which inexactly solve a sequence of proximal suproblems by means of an ACG variant, were previously proposed by \cite{YHe2,YheMoneiroNash,OliverMonteiro,MonteiroSvaiterAcceleration,LanADMM}.
The behavior of an accelerated gradient method near saddle points of unconstrained instances of \eqref{eq:ProbIntro} (i.e., with $h=0$) is studied in \cite{Jwright2017}.

{\it Organization of the paper.}  Subsection~\ref{sec:DefNot} contains basic definitions and notations used in the paper.
Section~\ref{sec:frame} is divided into three subsections. The first one introduces the composite nonconvex optimization (CNO) problem  and discusses some notions of approximate solutions. The second subsection presents a general accelerated inexact proximal point (GAIPP) framework for solving the CNO problem and describes
its global convergence rate bounds. The third subsection is devoted to the proof of the bounds stated in the previous subsection.
Section~\ref{sec:method} contains two subsections. The first one reviews the ACG  method and its properties and
 Subsection~\ref{sec:D-AIPP} presents the D-AIPP method and  its iteration-complexity analysis.
Section~\ref{sec:computResults} presents computational results illustrating the efficiency of the D-AIPP method.
Finally, the appendix contains some complementary results and proofs.

\subsection{Basic definitions and notation} \label{sec:DefNot}

This subsection provides some basic definitions and notations used in this paper.

The set of real numbers is denoted by $\mathbb{R}$. The set of non-negative real numbers  and 
the set of positive real numbers are denoted by $\R_+$ and $\R_{++}$, respectively. We let $\R^2_{++}:=\R_{++}\times \R_{++}$.
Let $\R^n$ denote the standard $n$-dimensional Euclidean 
space equipped with  inner product and norm denoted by $\left\langle \cdot,\cdot\right\rangle $
and $\|\cdot\|$, respectively. For $t>0$, define
$\log^+_1(t):= \max\{\log t ,1\}$. 

The diameter of a set $\Omega \subset \R^n$ is defined as $\sup \{\|z-z'\|:z,z'\in \Omega\}$. In addition, if $ \Omega $ is a nonempty closed convex set, the orthogonal projection $ P_{\Omega}: \R^n \rightarrow \R^n $ onto $ \Omega $ is defined as 
\[
P_{\Omega}(z):=\argmin_{z'\in \Omega} \|z'-z\| \quad \forall z\in \R^n.
\]

Let $\psi: \R^n\rightarrow (-\infty,+\infty]$ be given. The effective domain of $\psi$ is denoted by
$\dom \psi:=\{x \in \R^n: \psi (x) <\infty\}$ and $\psi$ is proper if $\dom \psi \ne \emptyset$.
Moreover, a proper function $\psi: \R^n\rightarrow (-\infty,+\infty]$ is $\mu$-strongly convex for some $\mu \ge 0$ if
$$
\psi(\alpha z+(1-\alpha) u)\leq \alpha \psi(z)+(1-\alpha)\psi(u) - \frac{\alpha(1-\alpha) \mu}{2}\|z-u\|^2
$$
for every $z, u \in \dom \psi$ and $\alpha \in [0,1]$.
If $\psi$ is differentiable at $\bar x \in \R^n$, then its affine approximation $\ell_\psi(\cdot;\bar x)$ at $\bar x$ is defined as
\[
\ell_\psi(z;\bar z) :=  \psi(\bar z) + \inner{\nabla \psi(\bar z)}{z-\bar z} \quad \forall  z \in \R^n.
\]
Also, for $\varepsilon \ge 0$,  its \emph{$\varepsilon$-subdifferential} at $z \in \dom \psi$ is denoted by
\begin{equation}\label{eq:epsubdiff}
\partial_\varepsilon \psi (z):=\left\{ v\in\R^n: \psi(u)\geq \psi(z)+\left\langle v,u-z\right\rangle -\varepsilon,\forall u\in\R^n\right\}.
\end{equation}
The subdifferential of $\psi$ at $z \in \dom \psi$, denoted by $\partial \psi (z)$, corresponds to  $\partial_0 \psi(z)$.
The set of all proper lower semi-continuous convex functions $\psi:\R^n\rightarrow (-\infty,+\infty]$  is denoted by $\bConv{n}$.
%

%
%
%
%
%

\section{A general accelerated IPP framework}\label{sec:frame}

This section contains three subsections. The first one states the CNO problem and 
discusses some notions of approximate solutions. The second subsection presents the GAIPP framework for solving the CNO problem and its global convergence rate bounds. The third subsection provides the detailed technical results for proving the bounds stated in the previous subsection.


\subsection{The CNO problem and corresponding approximate solutions}\label{subsec:approxsol}
This subsection describes the CNO problem which will be the main subject of our analysis in
Subsection \ref{sec:D-AIPP}.
It also presents different notions of approximate solutions for the CNO problem and discusses their relationship. 

The CNO problem we are interested in is \eqref{eq:ProbIntro} where the following conditions are assumed to hold:
\begin{itemize}
\item[(A1)]$h \in \bConv{n}$;  
\item[(A2)] $f$ is a differentiable function on a closed convex set $ \Omega \supseteq \dom h$ and there exist scalars  $M\geq m>0$ such that
\eqref{ineq:lowercurv} holds and
$\nabla f$ is $M$-Lipschitz continuous on $\Omega$, i.e.,
\[
\| \nabla f(u) - \nabla f(z) \| \le M \|u-z\| \quad \forall u,z \in \Omega;
\]
\item[(A3)] 
the diameter $D$ of $\dom h$ is finite.
\end{itemize}

We now make a few remarks about the above assumptions.
First, (A1)-(A3) imply that the set $X_*$ of optimal solutions \eqref{eq:ProbIntro} is non-empty and hence that $\phi_* > -\infty$.
Second, the assumption that $\nabla f$ is $M$-Lipschitz continuous on $\Omega$ in (A2) implies that
\[
- \frac{M}2 \|u-z\|^2  \le f(u) - \ell_f(u;z) \le \frac{M}2 \|u-z\|^2 \quad \forall u,z \in \Omega
\]
and hence that condition \eqref{ineq:lowercurv} which also appears in (A2) is redundant when $m \ge M$. 
Third, it is well-known that a necessary condition for $z^*\in\dom  h$ to be a local minimum of \eqref{eq:ProbIntro} is that
$z^*$ be a stationary point of $f+h$, i.e.,  $0 \in \nabla f(z^*)+\partial h(z^*)$.

The latter inclusion motivates the following notion of approximate solution for problem \eqref{eq:ProbIntro}:
for a given tolerance $\hat \rho>0$,  a pair $( \hat{z},\hat{v})$ is called a $\hat\rho$-approximate solution of \eqref{eq:ProbIntro} if it satisfies
\eqref{eq:approxOptimCond_Intro}.
Another notion of approximate solution that naturally arises in our analysis of the general framework of  Subsection~\ref{subsec:GAIPP} is as follows.
For a given  tolerance pair $(\bar \rho, \bar \varepsilon) \in \R^2_{++}$,  a quintuple $(\lambda,z^-, z, w,\varepsilon) \in \R_{++}\times\R^n \times  \R^n \times \R^n \times \R_+$ is called a $(\bar \rho, \bar \varepsilon)$-prox-approximate solution 
of \eqref{eq:ProbIntro} if
\begin{equation} \label{eq:ref4'}
w \in \partial_{ \varepsilon}\left( \phi + \frac{1}{2\lam} \|\cdot - z^- \|^2 \right)  (z),
\quad \left\|\frac{1}\lam (z^--z)  \right\| \le  \bar \rho, \quad \varepsilon \le  \bar \varepsilon.
\end{equation}

Note that the first definition of approximate solution above depends on the composite structure $(f,h)$ of $\phi$ but the second one does not.

The next proposition, whose proof can be found in \cite{kong2018complexity}, shows how an approximate solution as in \eqref{eq:approxOptimCond_Intro}
can be obtained from a prox-approximate solution by performing a composite gradient step.

\begin{proposition}\label{prop:refapproxsol}
Let $h \in \bConv{n}$, $f$ be a differentiable function on a closed convex set $\Omega \supseteq \dom h$, and its gradient $ \nabla f $ is $M$-Lipschitz continuous on $\Omega$.
Let $(\bar \rho,\bar \varepsilon) \in \R^2_{++}$ and a  $(\bar \rho,\bar \varepsilon)$-prox-approximate solution 
$(\lambda,z^-,z, w,\varepsilon)$ be given  
 and define
\begin{align}\label{eq:def_zg} 
z_f &:= \argmin_u \left\{ \ell_f(u; z) + h(u) + \frac{M+\lambda^{-1}}2 \|u-z\|^2  \right \},\\
q_f&:=[M+\lambda^{-1}](z-z_f),\label{eq:def_qg}\\
v_f &:=  q_f+\nabla f(z_f)  - \nabla f(z).\label{eq:def_vg} 
\end{align}
Then, $(z_f,v_f)$ satisfies 
\[
v_f \in \nabla f(z_f) + \partial  h(z_f), \quad \|v_f\| \le 2 \|q_f\| \le 2 \left  [ \bar\rho + \sqrt{2\bar{\varepsilon} (M+\lam^{-1}) } \right ].
\]
\end{proposition}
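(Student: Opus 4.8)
Throughout write $L:=M+\lambda^{-1}$. First note that $z_f\in\dom h\subseteq\Omega$, since $z_f$ is the minimizer in \eqref{eq:def_zg} of a function whose effective domain is $\dom h$; hence $f(z_f)$ and $\nabla f(z_f)$ are well defined and the usual consequences of $M$-Lipschitz continuity of $\nabla f$ are available on the segment $[z,z_f]\subseteq\Omega$. The assertion splits into three parts: the inclusion $v_f\in\nabla f(z_f)+\partial h(z_f)$, the bound $\|v_f\|\le 2\|q_f\|$, and the bound $\|q_f\|\le\bar\rho+\sqrt{2\bar\varepsilon L}$.

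The first two parts are short. Since the objective in \eqref{eq:def_zg} is $L$-strongly convex with minimizer $z_f$ and $\nabla_u\ell_f(u;z)\equiv\nabla f(z)$, its first-order optimality condition reads $0\in\nabla f(z)+\partial h(z_f)+L(z_f-z)$; rearranging and recalling \eqref{eq:def_qg} gives $q_f-\nabla f(z)\in\partial h(z_f)$, and adding $\nabla f(z_f)$ to both sides produces $v_f\in\nabla f(z_f)+\partial h(z_f)$ in view of \eqref{eq:def_vg}. For the norm bound, \eqref{eq:def_vg}, the triangle inequality, and $M$-Lipschitz continuity of $\nabla f$ give $\|v_f\|\le\|q_f\|+M\|z_f-z\|$; since $\|z_f-z\|=\|q_f\|/L$ by \eqref{eq:def_qg} and $M/L\le1$, we conclude $\|v_f\|\le 2\|q_f\|$.

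The substance of the proof is the estimate on $\|q_f\|$, which I would obtain in five steps. (i) By $L$-strong convexity of $m_z(\cdot):=\ell_f(\cdot;z)+h(\cdot)+\frac{L}{2}\|\cdot-z\|^2$ about its minimizer $z_f$, $m_z(z)-m_z(z_f)\ge\frac{L}{2}\|z-z_f\|^2=\frac{1}{2L}\|q_f\|^2$; since $m_z(z)=f(z)+h(z)$, this upper-bounds $\frac{1}{2L}\|q_f\|^2$ by $f(z)+h(z)-\ell_f(z_f;z)-h(z_f)-\frac{L}{2}\|z_f-z\|^2$. (ii) Evaluating the $\varepsilon$-subgradient inequality $w\in\partial_\varepsilon(\phi+\frac{1}{2\lambda}\|\cdot-z^-\|^2)(z)$ from the definition of a prox-approximate solution at $u=z_f$, and using $\phi=f+h$, bounds $f(z)+h(z)$ above by $f(z_f)+h(z_f)+\frac{1}{2\lambda}\big(\|z_f-z^-\|^2-\|z-z^-\|^2\big)-\langle w,z_f-z\rangle+\varepsilon$. (iii) Substituting (ii) into (i) cancels the $h(z_f)$ terms; applying the Lipschitz estimate $f(z_f)-\ell_f(z_f;z)\le\frac{M}{2}\|z_f-z\|^2$ turns the coefficient of $\|z_f-z\|^2$ into $\frac{M}{2}-\frac{L}{2}=-\frac{1}{2\lambda}$. (iv) The three-point identity $\|z_f-z^-\|^2-\|z-z^-\|^2=\|z_f-z\|^2+2\langle z_f-z,z-z^-\rangle$ then makes all terms in $\|z_f-z\|^2$ cancel and collapses the remaining $z^-$-dependence into a single inner product, leaving $\frac{1}{2L}\|q_f\|^2\le\big\langle z_f-z,\ \frac{1}{\lambda}(z-z^-)-w\big\rangle+\varepsilon$. (v) Substituting $z_f-z=-q_f/L$ from \eqref{eq:def_qg} gives $\|q_f\|^2\le 2\big\langle q_f,\ \frac{1}{\lambda}(z^--z)+w\big\rangle+2L\varepsilon$; Cauchy--Schwarz, the residual and error bounds furnished by the definition of a $(\bar\rho,\bar\varepsilon)$-prox-approximate solution, and solving the resulting scalar quadratic inequality in $\|q_f\|$ then yield $\|q_f\|\le\bar\rho+\sqrt{2\bar\varepsilon L}$. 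Together with $\|v_f\|\le 2\|q_f\|$ this proves the proposition.

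I expect the one genuinely delicate point to be the bookkeeping across steps (iii)--(iv): verifying that, after inserting the $\varepsilon$-subgradient inequality and the Lipschitz upper model for $f$, the several squared-distance terms $\|z_f-z\|^2$, $\|z_f-z^-\|^2$, $\|z-z^-\|^2$ carry coefficients that combine exactly so that the three-point identity eliminates the dependence on $z^-$ and leaves a clean inner product against $q_f$. The two optimality-condition manipulations and the final one-variable quadratic inequality are routine.
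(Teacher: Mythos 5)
Your plan is sound in outline and, for the record, cannot be compared against an in-paper argument: the paper defers the proof of this proposition entirely to \cite{kong2018complexity}. The inclusion $v_f\in\nabla f(z_f)+\partial h(z_f)$ and the bound $\|v_f\|\le 2\|q_f\|$ are handled correctly, and steps (i)--(iv) do produce, with $L:=M+\lambda^{-1}$ and $r:=w+\lambda^{-1}(z^--z)$, the inequality $\|q_f\|^2\le 2\langle q_f,r\rangle+2L\varepsilon$. The gap is concentrated in your final step, and it is twofold. First, ``the residual bound furnished by the definition'' does not bound $\|r\|$: condition \eqref{eq:ref4'} controls $\|\lambda^{-1}(z^--z)\|$ but says nothing about $\|w\|$. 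This is not a bookkeeping issue you can absorb --- under the literal definition the proposition is false. Take $f\equiv 0$, $h(u)=cu$ plus the indicator of $[0,1]$, $\lambda=1$, $z=z^-=1$, $\varepsilon=0$, $w=c$: the quintuple is a $(\bar\rho,\bar\varepsilon)$-prox-approximate solution for every $\bar\rho,\bar\varepsilon>0$, yet $q_f=c$, which can be as large as $L$. The hypothesis your argument actually needs (and which the intended definition in \cite{kong2018complexity} supplies, and which the D-AIPP stopping rule \eqref{ineq:stop-inner} delivers in the application) is $\|w+\lambda^{-1}(z^--z)\|\le\bar\rho$; your proof must name it rather than attribute it to \eqref{eq:ref4'}.

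Second, even granting $\|r\|\le\bar\rho$, the scalar quadratic $t^2\le 2\bar\rho t+2L\bar\varepsilon$ yields only $t\le\bar\rho+\sqrt{\bar\rho^2+2L\bar\varepsilon}$, which strictly exceeds the asserted $\bar\rho+\sqrt{2L\bar\varepsilon}$ whenever $\bar\rho>0$; so your route proves at best $\|q_f\|\le 2\bar\rho+\sqrt{2L\bar\varepsilon}$. The exact constant matters here, since Corollary \ref{cor:AIPPref2}(b) uses it with equality to convert $(\bar\rho,\bar\varepsilon)$ into $\hat\rho$. To recover it, do not spend $\|r\|$ through a single Cauchy--Schwarz: your chain (i)--(iv), run for arbitrary $u$ instead of only $u=z_f$, shows $r\in\partial_\varepsilon m_z(z)$ for the $L$-strongly convex model $m_z$. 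Let $\tilde z$ minimize $m_z-\langle r,\cdot\rangle$; then $\|z-\tilde z\|\le\sqrt{2\varepsilon/L}$ because $z$ is an $\varepsilon$-minimizer of that $L$-strongly convex function, and $\|\tilde z-z_f\|\le\|r\|/L$ by strong monotonicity of $\partial m_z$ (since $0\in\partial m_z(z_f)$ and $r\in\partial m_z(\tilde z)$). The triangle inequality then gives $\|q_f\|=L\|z-z_f\|\le\|r\|+\sqrt{2L\varepsilon}$, which is the stated bound.
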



\subsection{A general accelerated IPP framework}\label{subsec:GAIPP}

This subsection introduces the GAIPP framework for solving \eqref{eq:ProbIntro}
and presents a global convergence rate result  for it whose proof is given in the next subsection.

We start by stating the GAIPP  framework.

%
%


\noindent\rule[0.5ex]{1\columnwidth}{1pt}

GAIPP Framework

\noindent\rule[0.5ex]{1\columnwidth}{1pt}
\begin{itemize}
	\item [0.] Let $ x_0=y_0\in \dom h $, $0 < \theta < \alpha \le 1$, $ 0< \kappa < 1 $ , $ \delta\ge0 $
	be given, and set $ k=0 $ and $A_0=0$;
	\item [1.] compute
\begin{equation}
		a_k =\frac{1+\sqrt{1+4A_k}}{2} \label{eq:ak}
		\end{equation} 
		\begin{equation} \label{eq:Ak}
		A_{k+1}=A_k+a_k
		\end{equation}
		\begin{equation}
		\tilde x_k= \frac{A_k}{A_{k+1}}y_k+\frac{a_k}{A_{k+1}}x_k;  \label{eq:txk}
		\end{equation}
		\item [2.] choose $ \lam_k>0 $ and find a triple $ (y_{k+1}, \tilde v_{k+1}, \tilde\varepsilon_{k+1}) $ satisfying
			\begin{equation}\label{subdiff}
			\tilde v_{k+1}\in \partial_{\tilde \varepsilon_{k+1}} \left( \lam_k \phi (\cdot) + \frac12 \|\cdot-\tilde x_k\|^2-\frac{\alpha}{2}\|\cdot-y_{k+1}\|^2 \right) (y_{k+1}), 
			\end{equation}
			\begin{equation}\label{ineq:frame}
			\frac{1}{\alpha+\delta}
			\| \tv_{k+1} + \delta(y_{k+1}-\tilde x_k) \|^2 +2\tilde\varepsilon_{k+1} \le (\kappa \alpha + \delta) \|y_{k+1}-\tilde x_k\|^2;
			\end{equation}
		\item[3.] compute
\begin{align}\label{def:x+ FISTA}
                  x_{k+1}&:=\frac{-\tv_{k+1}+\alpha y_{k+1} + \delta x_k/a_k - (1-1/a_k)\theta y_k}{\alpha-\theta+(\theta+\delta)/a_k};
		\end{align}
		\item[4.] set $ k $ \ensuremath{\leftarrow} $ k+1 $ and go to step 1.
	\end{itemize}
\rule[0.5ex]{1\columnwidth}{1pt}

We now make several remarks about the GAIPP framework.
First, GAIPP doest not specify how the triple $ (y_{k+1}, \tilde v_{k+1}, \tilde\varepsilon_{k+1}) $ in step 2 is computed and hence should be viewed as a conceptual framework consisting of (possibly many) specific instances. Second, it assumes that the above triple exists which is generally not the case. Third, when the function inside the parenthesis in \eqref{subdiff} is convex, then it is actually possible to show that a triple
$ (y_{k+1}, \tilde v_{k+1}, \tilde\varepsilon_{k+1}) $ satisfying both \eqref{subdiff} and \eqref{ineq:frame}
can be computed by means of an accelerated gradient method applied to the problem of minimizing the aforementioned function
(see the discussion in Subsection \ref{subsec:ACG}).
Fourth, for the purpose of the analysis of this section, we do not assume that the latter function is convex.
Fifth,
it is easy to see that \eqref{eq:ak} implies that
\begin{equation} \label{eq:rel-theta}
A_k+a_k = a_k^2.
\end{equation}
Sixth,  for any given $\bar \rho >0$, it can be shown with the aid of Proposition \ref{prop:refapproxsol} and \eqref{ineq:frame}
 that, if the quantity $\liminf_{k\to\infty} \|y_{k+1}-\tx_k\|/\lam_k =0$,
then
 the pair $(\hat z,\hat v)$ obtained from formulae \eqref{eq:def_zg}-\eqref{eq:def_vg} with the quintuple
$(\lam,z^-,z,w,\varepsilon)=(\lam_k,\tx_k,y_{k+1},\tv_{k+1},\tilde \varepsilon_{k+1})$
eventually satisfies \eqref{eq:approxOptimCond_Intro}.

The following result whose proof will be given in Subsection \ref{sec:proof} describes how fast the quantity $ \|y_{k+1}-\tx_k\|/\lam_k$ approaches zero.

\begin{theorem} \label{prop:main}
Consider a sequence $\{(y_k,\tx_k,\lam_k)\}$ generated by the GAIPP framework.
Then, for every $ k\ge0 $,
\begin{equation}\label{ineq:r_i}
\min_{0\le i \le k-1} \frac{\|y_{i+1}-\tilde x_i\|^2}{\lam_{i}^2}  	\le 
\frac{ \left[ \theta+\delta+c_0 k	+ 2\left( 1-\theta\right)\sum_{i=0}^{k-1}a_i\right] D^2} {(1-\kappa \alpha) \sum_{i=0}^{k-1}A_{i+1} \lambda_i^2 }
\end{equation}
where $\{a_k\}$ and $\{A_k\}$ are as in \eqref{eq:ak} and \eqref{eq:Ak}, $D$ is as in (A3) and
\begin{equation}\label{eq:beta,tauk}
 c_0 := \frac{4(1-\theta)\beta^2}{(1-\tau_0)^2}, 
\quad \beta:= 3+\frac{4(\theta+\delta)}{\alpha-\theta}, \quad \tau_0 := \frac{\sqrt{\kappa\alpha+\delta}}{\sqrt{\alpha+\delta}}.
\end{equation}
\end{theorem}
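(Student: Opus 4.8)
The plan is to establish a one-step recursion for a suitably chosen potential (Lyapunov) function and then telescope it. Concretely, I would look for nonnegative coefficients so that a quantity of the form
\[
\Gamma_k := A_k\big(\phi(y_k)-\phi_*\big) + \frac{\theta}{2}\|y_k - x^*\|^2 + \frac{1}{2}\|x_k - x^*\|^2 + (\text{lower-order terms involving } \tilde\varepsilon, \|x_k-y_k\|)
\]
is (approximately) nonincreasing, where $x^*\in X_*$. The point of the $\alpha$-shifted $\varepsilon$-subgradient inclusion \eqref{subdiff} together with the stepsize-type inequality \eqref{ineq:frame} is that it yields, after using the definition of the $\varepsilon$-subdifferential at the two test points $y_k$ and $x^*$ and the convexity bookkeeping built into \eqref{eq:ak}–\eqref{eq:txk} and \eqref{def:x+ FISTA}, a descent inequality of the shape
\[
\Gamma_{k+1} \le \Gamma_k - (1-\kappa\alpha)\,\frac{A_{k+1}}{2}\,\|y_{k+1}-\tilde x_k\|^2 + \big(\text{errors controlled by } D^2\big).
\]
The first step, then, is to write down the correct potential — this is really a matter of reverse-engineering the update \eqref{def:x+ FISTA}, whose peculiar denominator $\alpha-\theta+(\theta+\delta)/a_k$ and numerator are precisely what make the cross terms cancel.

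Once the one-step inequality is in hand, I would telescope from $i=0$ to $i=k-1$. Since $\Gamma_k \ge 0$ and $\Gamma_0$ is bounded in terms of $D^2$ (because $x_0=y_0\in\dom h$ and $\dom h$ has diameter $D$), this gives
\[
(1-\kappa\alpha)\sum_{i=0}^{k-1} A_{i+1}\,\|y_{i+1}-\tilde x_i\|^2 \;\le\; \Big[\theta+\delta + c_0 k + 2(1-\theta)\sum_{i=0}^{k-1} a_i\Big] D^2,
\]
where the $c_0 k$ term absorbs the accumulated per-iteration error (the $\varepsilon$-errors and the boundedness slack measured against $D$), and the $\sum a_i$ term comes from the part of the potential carrying the $\phi(y_k)-\phi_*$ contributions, which one bounds crudely by $MD^2$-type estimates times $A_k \sim a_k^2$, summed. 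I would identify $\beta$ and $\tau_0$ from \eqref{eq:beta,tauk} as exactly the constants arising when one estimates $\|x_k - x^*\|$ and the $\varepsilon$-terms: $\tau_0<1$ (guaranteed since $\kappa<1$) appears through a geometric-type bound $\sum \tau_0^{\,j}\le 1/(1-\tau_0)$, and $\beta$ through the ratio in the denominator of \eqref{def:x+ FISTA}. Finally, dividing both sides by $(1-\kappa\alpha)\sum_{i=0}^{k-1} A_{i+1}\lambda_i^2$ — after inserting the $\lambda_i^{-2}$ inside the min on the left via $\|y_{i+1}-\tilde x_i\|^2 = \lambda_i^2 \cdot \|y_{i+1}-\tilde x_i\|^2/\lambda_i^2 \ge \lambda_i^2 \min_i(\cdots)$ — yields exactly \eqref{ineq:r_i}.

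The main obstacle, and the step I expect to be genuinely delicate rather than routine, is producing the clean one-step inequality $\Gamma_{k+1}\le\Gamma_k - (1-\kappa\alpha)\frac{A_{k+1}}{2}\|y_{k+1}-\tilde x_k\|^2 + O(D^2)$ without assuming convexity of the bracketed function in \eqref{subdiff}. In the convex FISTA-style analysis one would directly compare $\phi(y_{k+1})$ against $\phi$ at a convex combination of $y_k$ and $x^*$; here $\phi$ is nonconvex, so I would instead use only the $\varepsilon$-subgradient inequality \eqref{subdiff} (which is an honest inequality regardless of convexity) at the two points, then handle the resulting indefinite quadratic in $\|y_{k+1}-\tilde x_k\|$, $\|x_{k+1}-x^*\|$, and $\tilde\varepsilon_{k+1}$ by completing the square and invoking \eqref{ineq:frame} to dominate the bad terms — the factor $(\kappa\alpha+\delta)$ on the right of \eqref{ineq:frame} versus $(\alpha+\delta)$ in the denominator is what leaves the residual coefficient $(1-\kappa\alpha)$ and forces the appearance of $\tau_0=\sqrt{\kappa\alpha+\delta}/\sqrt{\alpha+\delta}$. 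Getting the exact bookkeeping of the $\theta$ versus $\alpha$ gap and the $\delta$-terms to line up with the stated constants is where all the care goes; everything after that is telescoping and crude norm estimates against $D$.
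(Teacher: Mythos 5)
Your proposal is correct and follows essentially the same route as the paper: the paper constructs the quadratic minorant $\gamma_k$ in \eqref{eq:gammak} from the inclusion \eqref{subdiff}, observes that \eqref{def:x+ FISTA} makes $x_{k+1}$ the minimizer of $a_k\gamma_k+\tfrac{\theta+\delta}{2}\|\cdot-x_k\|^2$, uses \eqref{ineq:frame} to extract the residual $(1-\kappa\alpha)$ exactly as you describe, and telescopes the potential $\eta_k=A_k\lam_k(\phi(y_k)-\phi_*)+\tfrac{\theta+\delta}{2}\|x_k-x_*\|^2$ via Lemma \ref{summable 2}. The one point your sketch leaves implicit but correctly anticipates is that the non-telescoping term $(1-\theta)\|x_i-x_*\|^2$ in \eqref{ineq:sum} cannot be bounded by $D^2$ directly (the $x_i$ need not lie in $\dom h$), which is precisely why the paper proves the separate geometric recursion $\|x_{k+1}-\bar x\|\le\tau_0\|x_k-\bar x\|+\beta D$ in \eqref{ineq:dk}, the source of the constants $\beta$, $\tau_0$ and of the $c_0k$ term.
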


We now make a remark about the parameter $c_0$ which appears in \eqref{ineq:r_i}.
First,  $c_0$ depends on the constants
$\alpha$, $\theta$, $\kappa$ and $\delta$  which are given as inputs to the GAIPP framework.
Due to step 0 of the GAIPP framework, the first three constants are ${\cal O}(1)$ but $\delta$ is an arbitrary nonngegative parameter,
and hence it is interesting to examine the dependence of $c_0$ on $\delta$ under the assumption that  $\alpha$, $\theta$ and $\kappa$ are ${\cal O}(1)$, and
the quantities  $(1-\kappa)^{-1}$ and $(\alpha-\theta)^{-1}$ are also ${\cal O}(1)$.
%
Indeed, the definitions of $ \beta $ and $ \tau_0 $ in \eqref{eq:beta,tauk} imply that
	\[
	1-\tau_0=\frac{\sqrt{\alpha+\delta}-\sqrt{\kappa\alpha+\delta}}{\sqrt{\alpha+\delta}}\\
	=\frac{(1-\kappa)\alpha}{\sqrt{\alpha+\delta}(\sqrt{\alpha+\delta}+\sqrt{\kappa\alpha+\delta})}\\
	\ge \frac{(1-\kappa)\alpha}{2(\alpha+\delta)},
	\]
	and hence that
	\[
	\frac\beta{1-\tau_0} \le \frac{2(\alpha+\delta)(3\alpha+\theta+4\delta)}{(1-\kappa)\alpha(\alpha-\theta)}.
	\]
	Thus, it follows fromthe above assumption and  the definition of $c_0$ in  \eqref{eq:beta,tauk} that $c_0=\mathcal{O}(1+\delta^4)$.

The next result considers the special case in which the sequence of
stepsizes $\{\lambda_k\}$ is bounded away from zero and derives
an iteration-complexity bound for the GAIPP framework to obtain an index $k$ such that $ \|y_{k}-\tx_{k-1}\|/\lam_k\le \rho$.
It expresses the bound  not only  in terms of $D$, $\rho$ and the lower bound on $\{\lambda_k\}$ but also in terms of $\delta$, 
and then provides a range of values of $\delta$ for which the bound is minimized.



\begin{corollary}\label{coroll:bound}
Consider a sequence $\{(y_k,\tx_k,\lam_k)\}$ generated by the GAIPP framework and
assume that $(1-\kappa \alpha)^{-1} = {\cal O}(1)$ where $\kappa$ and $\alpha$ are as in step 0 of GAIPP
and that
there exists  $\underline{\lam}>0$ such that
$ \lam_k \ge \underline{\lam} $ for every $ k \ge 0$.
	Then, for any given tolerance $\rho>0$,  there exists an index $k$ such that $ \|y_{k}-\tx_{k-1}\|/\lam_k\le \rho$ and
	\begin{equation}\label{bound}
	k = \mathcal{O}\left( \frac{D^2}{\underline \lam^2 \rho^2} + \frac{\delta^2 D}{ \underline \lambda \rho} + \left( \frac{\delta D^2}{\underline \lam^2\rho^2}\right)^{1/3} + 1 \right).
	\end{equation}
	Consequently, if $ \delta = {\cal O} \left( \sqrt{\underline{\lam} \rho/D} + \sqrt{D/(\underline{\lam} \rho)} \right) $, then the above index $k$ satisfies
 \begin{equation} \label{eq:nene}
k={\cal O}(1+ D^2/(\underline{\lambda}^2 \rho^2) ).
\end{equation}
\end{corollary}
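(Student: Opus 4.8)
The plan is to derive Corollary \ref{coroll:bound} directly from Theorem \ref{prop:main} by bounding each of the three sums appearing on the right-hand side of \eqref{ineq:r_i} using the growth rate of $\{A_k\}$ and $\{a_k\}$. First I would recall the elementary facts that follow from \eqref{eq:ak}--\eqref{eq:Ak}: since $A_{k+1}=a_k^2$ by \eqref{eq:rel-theta} and $a_k=(1+\sqrt{1+4A_k})/2\ge 1$, an induction shows $A_k \ge k^2/4$ and, more precisely, $A_k = \Theta(k^2)$, while $a_k = \Theta(k)$; consequently $\sum_{i=0}^{k-1} a_i = \Theta(k^2)$ and $\sum_{i=0}^{k-1} A_{i+1} = \Theta(k^3)$.

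Next I would substitute these estimates into \eqref{ineq:r_i} together with the lower bound $\lam_i \ge \underline\lam$. The denominator $(1-\kappa\alpha)\sum_{i=0}^{k-1} A_{i+1}\lam_i^2$ is then $\Omega(\underline\lam^2 k^3)$ using the hypothesis $(1-\kappa\alpha)^{-1}={\cal O}(1)$. The numerator bracket $\theta+\delta+c_0 k + 2(1-\theta)\sum_{i=0}^{k-1}a_i$ is ${\cal O}(\delta + c_0 k + k^2)$; here I would also invoke the earlier remark that $c_0={\cal O}(1+\delta^4)$ under the running ${\cal O}(1)$ assumptions on $\alpha,\theta,\kappa,(1-\kappa)^{-1},(\alpha-\theta)^{-1}$, so the bracket is ${\cal O}(\delta + (1+\delta^4)k + k^2)$. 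Dividing, $\min_{0\le i\le k-1}\|y_{i+1}-\tx_i\|^2/\lam_i^2 = {\cal O}\big( (\delta + (1+\delta^4)k + k^2) D^2 / (\underline\lam^2 k^3) \big)$. To guarantee this is at most $\rho^2$, it suffices to pick $k$ large enough that each of the three terms $\delta D^2/(\underline\lam^2 k^3)$, $(1+\delta^4) D^2/(\underline\lam^2 k^2)$, and $D^2/(\underline\lam^2 k)$ is ${\cal O}(\rho^2)$; solving each for $k$ gives $k={\cal O}\big( (\delta D^2/(\underline\lam^2\rho^2))^{1/3} + \delta^2 D/(\underline\lam\rho) + D^2/(\underline\lam^2\rho^2) + 1\big)$, which is exactly \eqref{bound} (the $\delta^4$ inside the square root collapses to $\delta^2$ after taking the root, and the constant term absorbs the $\delta^0$ contributions). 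The existence of such an index $k$ with $\|y_k-\tx_{k-1}\|/\lam_k\le\rho$ follows because the minimum over $i$ in \eqref{ineq:r_i} is attained at some $i+1\le k$.

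For the last assertion, I would simply substitute the stipulated range $\delta = {\cal O}\big(\sqrt{\underline\lam\rho/D} + \sqrt{D/(\underline\lam\rho)}\big)$ into \eqref{bound} and check that the first two error terms become dominated by the last: writing $t:=D/(\underline\lam\rho)$, the hypothesis reads $\delta={\cal O}(t^{-1/2}+t^{1/2})$, so $\delta^2 D/(\underline\lam\rho)=\delta^2 t = {\cal O}(1+t^2)$ and $(\delta D^2/(\underline\lam^2\rho^2))^{1/3}=(\delta t^2)^{1/3}={\cal O}((t^{3/2}+t^{5/2})^{1/3})={\cal O}(t^{1/2}+t^{5/6})={\cal O}(1+t^2)$, both of which are ${\cal O}(1+t^2)$, matching the dominant term $D^2/(\underline\lam^2\rho^2)=t^2$. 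This yields \eqref{eq:nene}.

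The main obstacle, such as it is, is bookkeeping rather than a genuine difficulty: one must be careful to (i) establish the two-sided estimates $A_k=\Theta(k^2)$, $\sum A_{i+1}=\Theta(k^3)$ cleanly from the recursion, and (ii) correctly track the $\delta$-dependence through both $c_0={\cal O}(1+\delta^4)$ and the explicit $\delta$ in the numerator, making sure the three-term decomposition of the required $k$ is tight and that taking cube roots and square roots of the $\delta$-powers produces precisely the exponents $1/3$, $2$ shown in \eqref{bound}. No step requires an idea beyond Theorem \ref{prop:main} and the stated remarks.
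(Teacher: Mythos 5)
Your proposal is correct and follows essentially the same route as the paper: apply Theorem \ref{prop:main}, bound the sums $\sum_{i=0}^{k-1}A_{i+1}$ and $\sum_{i=0}^{k-1}a_i$ via the growth of $\{a_k\},\{A_k\}$ (the paper packages these as Lemma \ref{estimates} in the appendix, using the Cauchy--Schwarz ratio bound $\sum a_i/\sum A_{i+1}\le 4/k$ where you use the two-sided $\Theta$ estimates), invoke $c_0={\cal O}(1+\delta^4)$, and solve the resulting three terms for $k$. The only difference is that you spell out the final substitution of the range of $\delta$, which the paper leaves as "immediate."
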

\begin{proof}
	Using \eqref{ineq:r_i} and the bounds presented in Lemma \ref{estimates}, we conclude that
		\[
		\min_{0\le i \le k-1} \frac{\|\tilde x_i-y_{i+1}\|^2}{\lam_{i}^2} \le \frac{D^2}{(1-\kappa \alpha)\underline{\lam}^2}\left[\frac{12(\theta+\delta)}{ k^3}+\frac{12c_0}{k^2}+\frac{8(1-\theta)}{k}\right]
		\]
which, together with the fact that $(1-\kappa \alpha)^{-1} = {\cal O}(1)$ and
$c_0=\mathcal{O}(1+\delta^4)$,  immediately implies the first conclusion of the corollary.
The second conclusion follows immediately from the first one.
\end{proof}

Corollary \ref{coroll:bound} shows that there is a wide range of parameters $\delta$ for which the complexity \eqref{bound} is minimized, i.e., equal to \eqref{eq:nene}.


%
%
%

\subsection{Proof of Theorem \ref{prop:main}}\label{sec:proof}

This subsection provides the proof of Theorem \ref{prop:main} which describes how fast the quantity $ \|y_{k+1}-\tx_k\|/\lam_k$ approaches zero.

Before giving the proof of Theorem \ref{prop:main}, we first state and prove a number of technical results.
The first one introduces two functions that play an important role in our analysis
and establish their main properties.

\begin{lemma}\label{basics}
Letting 
\begin{align}
\label{eq:tildephik}
\tilde \phi_k & := \lambda_k \phi + \frac{1}{2}\|\cdot-\tilde x_k\|^2, \\
\label{eq:gammak}
\gamma_k &:= \tilde \phi_k(y_{k+1})+\inner{\tilde v_{k+1}}{\cdot-y_{k+1}}+\frac{\alpha}2\|\cdot-y_{k+1}\|^2 -
\frac{\theta}2 \|\cdot-\tx_k\|^2-\tilde \varepsilon_{k+1}, 
\end{align}
then, for every $k \ge 0$, the following statements hold:
	\begin{itemize}
	\item[(a)]
	$\gamma_k$ is an $(\alpha-\theta)$-strongly convex quadratic function and
$\tilde \phi_k \ge \gamma_k + \theta \|\cdot-\tx_k\|^2/2$;
	\item[(b)] $ x_{k+1}=\argmin \left \{a_k\gamma_k (u)+ (\theta+\delta) \|u-x_k\|^2/2 : u \in \R^n  \right\} $;
	\item[(c)] $ \min\left\lbrace  \gamma_k(u)+(\theta+\delta) \|u-\tilde x_k\|^2/2 : u \in \R^n \right\rbrace
	\ge \lam_k \phi(y_{k+1})+(1-\kappa\alpha)\|y_{k+1}-\tilde x_k\|^2/2 $;
	\item[(d)] $ \gamma_k (u) - \lambda_k \phi(u) \le (1-\theta)\|u-\tx_k\|^2/2 $ for every $u \in \dom h$.
	\end{itemize}
\end{lemma}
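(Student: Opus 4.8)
\textbf{Proof plan for Lemma \ref{basics}.}

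The plan is to treat the four items in the order (a), (d), (b), (c), since (a) is the structural fact that everything else leans on, and (c) is the most delicate because it combines the subgradient inequality \eqref{subdiff}, the error bound \eqref{ineq:frame}, and the minimization characterization from (b).

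For item (a): I would first observe that $\tilde\phi_k$ as defined in \eqref{eq:tildephik} need not be convex (since $\phi$ is nonconvex), but the quadratic $\gamma_k$ in \eqref{eq:gammak} is obtained from a \emph{linearization} of $\tilde\phi_k$ at $y_{k+1}$ plus the term $\tfrac{\alpha}{2}\|\cdot-y_{k+1}\|^2$ minus $\tfrac{\theta}{2}\|\cdot-\tx_k\|^2$. Thus $\gamma_k$ is a quadratic whose Hessian is $(\alpha-\theta)I$, so it is $(\alpha-\theta)$-strongly convex precisely because $0<\theta<\alpha$ by step 0. For the inequality $\tilde\phi_k \ge \gamma_k + \tfrac{\theta}{2}\|\cdot-\tx_k\|^2$, I would rewrite it as $\tilde\phi_k - \tfrac{\theta}{2}\|\cdot-\tx_k\|^2 \ge \gamma_k + \theta\|\cdot-\tx_k\|^2/2 - \theta\|\cdot-\tx_k\|^2/2$, i.e. it suffices to show $\lambda_k\phi + \tfrac12\|\cdot-\tx_k\|^2 - \tfrac{\alpha}{2}\|\cdot-y_{k+1}\|^2 \ge \tilde\phi_k(y_{k+1}) + \langle \tilde v_{k+1},\cdot-y_{k+1}\rangle - \tilde\varepsilon_{k+1}$. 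But this is exactly the defining inequality of the $\tilde\varepsilon_{k+1}$-subdifferential relation \eqref{subdiff}, applied at an arbitrary point; so (a) is essentially a restatement of \eqref{subdiff}.

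For item (d): since $\gamma_k(u) - \lambda_k\phi(u) = \tilde\phi_k(y_{k+1}) + \langle\tilde v_{k+1},u-y_{k+1}\rangle + \tfrac{\alpha}{2}\|u-y_{k+1}\|^2 - \tfrac{\theta}{2}\|u-\tx_k\|^2 - \tilde\varepsilon_{k+1} - \lambda_k\phi(u)$, and by (a) we have $\lambda_k\phi(u) = \tilde\phi_k(u) - \tfrac12\|u-\tx_k\|^2 \ge \gamma_k(u) + \tfrac{\theta}{2}\|u-\tx_k\|^2 - \tfrac12\|u-\tx_k\|^2$; rearranging gives $\gamma_k(u)-\lambda_k\phi(u) \le \tfrac{1-\theta}{2}\|u-\tx_k\|^2$ directly. (One must double-check the direction of the inequality here — it comes from (a) with no extra work.) For item (b): $x_{k+1}$ should be the unconstrained minimizer of $u\mapsto a_k\gamma_k(u) + \tfrac{\theta+\delta}{2}\|u-x_k\|^2$; since $\gamma_k$ is the quadratic $\tilde\phi_k(y_{k+1}) + \langle\tilde v_{k+1},\cdot-y_{k+1}\rangle + \tfrac{\alpha}{2}\|\cdot-y_{k+1}\|^2 - \tfrac{\theta}{2}\|\cdot-\tx_k\|^2 - \tilde\varepsilon_{k+1}$, I would set the gradient to zero, getting $a_k[\tilde v_{k+1} + \alpha(u-y_{k+1}) - \theta(u-\tx_k)] + (\theta+\delta)(u-x_k) = 0$, solve for $u$, substitute $\tx_k = (A_k y_k + a_k x_k)/A_{k+1}$ from \eqref{eq:txk} together with $A_{k+1}=a_k^2$ from \eqref{eq:rel-theta} (so $A_k/A_{k+1} = 1 - 1/a_k$ and $a_k/A_{k+1} = 1/a_k$), and check that the resulting expression matches \eqref{def:x+ FISTA}. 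This is a purely algebraic verification.

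The main obstacle will be item (c). Here I would start from the strong convexity in (a): the function $u\mapsto \gamma_k(u) + \tfrac{\theta+\delta}{2}\|u-\tx_k\|^2$ is $(\alpha-\theta+\theta+\delta) = (\alpha+\delta)$-strongly convex, and I would want to evaluate (or lower-bound) its minimum. Using the explicit quadratic form of $\gamma_k$, its minimum over $u$ can be written in closed form; a natural route is to complete the square around $y_{k+1}$: write $\gamma_k(u) + \tfrac{\theta+\delta}{2}\|u-\tx_k\|^2 = \tilde\phi_k(y_{k+1}) - \tilde\varepsilon_{k+1} + \langle\tilde v_{k+1},u-y_{k+1}\rangle + \tfrac{\alpha}{2}\|u-y_{k+1}\|^2 + \tfrac{\delta}{2}\|u-\tx_k\|^2$ (the $-\tfrac{\theta}{2}\|u-\tx_k\|^2$ from $\gamma_k$ cancels part of the added term, leaving $\tfrac{\theta+\delta}{2} - \tfrac{\theta}{2} = \tfrac{\delta}{2}$). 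Then the quadratic part $\tfrac{\alpha}{2}\|u-y_{k+1}\|^2 + \tfrac{\delta}{2}\|u-\tx_k\|^2 + \langle\tilde v_{k+1},u-y_{k+1}\rangle$ has a minimum I can compute; it will involve $\|\tilde v_{k+1} + \delta(y_{k+1}-\tx_k)\|^2/(\alpha+\delta)$ and $\|y_{k+1}-\tx_k\|^2$ terms. At that point I invoke the error bound \eqref{ineq:frame}, which says precisely $\tfrac{1}{\alpha+\delta}\|\tilde v_{k+1}+\delta(y_{k+1}-\tx_k)\|^2 + 2\tilde\varepsilon_{k+1} \le (\kappa\alpha+\delta)\|y_{k+1}-\tx_k\|^2$, to absorb the bad terms, and use $\tilde\phi_k(y_{k+1}) = \lambda_k\phi(y_{k+1}) + \tfrac12\|y_{k+1}-\tx_k\|^2$ to extract the claimed $\lambda_k\phi(y_{k+1}) + \tfrac{1-\kappa\alpha}{2}\|y_{k+1}-\tx_k\|^2$. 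The bookkeeping of which coefficient of $\|y_{k+1}-\tx_k\|^2$ survives (one should land on $\tfrac12 - \tfrac{\kappa\alpha+\delta}{2} + \tfrac{\delta}{2}\cdot(\text{something}) = \tfrac{1-\kappa\alpha}{2}$ after the $\delta$ terms telescope) is where I expect to have to be careful; the $\delta$-terms must cancel exactly for the bound to be $\delta$-independent as stated.
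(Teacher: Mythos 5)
Your plan is correct and follows essentially the same route as the paper's proof: (a) and (d) read off the $\varepsilon$-subdifferential inequality from \eqref{subdiff}, (b) is the gradient/first-order-condition computation using \eqref{eq:txk} and \eqref{eq:rel-theta}, and (c) is the same completion-of-the-square yielding the minimum value $\tilde\phi_k(y_{k+1})+\tfrac{\delta}{2}\|y_{k+1}-\tx_k\|^2-\tfrac{1}{2(\alpha+\delta)}\|\tv_{k+1}+\delta(y_{k+1}-\tx_k)\|^2-\tilde\varepsilon_{k+1}$, after which \eqref{ineq:frame} gives the stated bound with the $\delta$-terms cancelling exactly as you anticipated. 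No gaps.
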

\begin{proof}
		(a) This statement follows immediately from the definitions of $\gamma_k$ and $\tilde \phi_k$  in \eqref{eq:gammak} and \eqref{eq:tildephik},
		respectively, the inclusion \eqref{subdiff} and  the definition of  the $\varepsilon$-subdifferential in \eqref{eq:epsubdiff}.
		
		(b) This statement follows immediately from \eqref{eq:Ak}, \eqref{eq:rel-theta} and the definition of
$\gamma_k$ in \eqref{eq:gammak}.

		(c) Using the definitions of $\gamma_k$ and $\tilde \phi_k$ in \eqref{eq:gammak} and \eqref{eq:tildephik},
		respectively, and inequality \eqref{ineq:frame}, we conclude that 
	\[
		\begin{aligned}
		\min &\left\lbrace \gamma_k(u)+\frac{\theta+\delta}{2}\|u-\tilde x_k\|^2 : u \in \R^n \right\rbrace  \\
		&= \min \left\lbrace \tilde \phi_k(y_{k+1})+\inner{\tilde v_{k+1}}{\cdot-y_{k+1}}+\frac{\alpha}2\|\cdot-y_{k+1}\|^2 +
		\frac{\delta}2 \|\cdot-\tx_k\|^2-\tilde \varepsilon_{k+1} \right\rbrace\\
		& = \tilde \phi(y_{k+1})+  \frac{\delta}2 \|y_{k+1}-\tilde x_k\|^2  - \frac{1}{2(\delta+\alpha)}
		\| v + \delta (y_{k+1}-\tilde x_k) \|^2 -\tilde\varepsilon_{k+1} \\
        & = \lam \phi(y_{k+1})+  \frac{1+\delta}2 \|y_{k+1}-\tilde x_k\|^2  - \frac{1}{2(\delta+\alpha)}
        \| v + \delta (y_{k+1}-\tilde x_k) \|^2 -\tilde\varepsilon_{k+1} \\
        & \ge \lam \phi(y_{k+1})+ \frac{1-\kappa \alpha}2 \|y_{k+1}-\tilde x_k\|^2 
		\end{aligned}
	\]
and hence that (c) holds.

(d) Using the inequality  in (a) and the definition of $ \tilde \phi_k $, we conclude that
\[
\gamma_k + \frac{\theta}2 \|\cdot-\tx_k\|^2\le \tilde \phi_k = \lam_k \phi+\frac{1}{2}\|\cdot-\tx_k\|^2,
\]
and hence that (d) holds.
\end{proof}

The next two results provide formulas relating quantities from $k$-th iteration to those from the $(k+1)$-th iteration.

\begin{lemma}\label{prep}
If $\gamma_k$ is defined as in Lemma \ref{basics}, then for every $k \ge 0$ and $u \in \R^n$:
	\[
		\begin{aligned}
	A_k \gamma_k(y_k)+a_k \gamma_k (u)+\frac{\theta+\delta}2 \|u-x_k\|^2\ge & A_{k+1} \left[\lam_k \phi(y_{k+1} )+\frac{1-\kappa \alpha}{2}\|y_{k+1}-\tilde x_k\|^2\right] \\
	& + \frac{a_k(\alpha -\theta)+\theta+\delta}2 \| u-x_{k+1}\|^2.
	\end{aligned}
	\]
\end{lemma}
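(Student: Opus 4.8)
The inequality in Lemma~\ref{prep} is a standard ``estimate-sequence'' recursion, so the plan is to build it from the four structural properties recorded in Lemma~\ref{basics} together with the recursions \eqref{eq:ak}--\eqref{eq:txk} and \eqref{eq:rel-theta}. The key observation is that $\gamma_k$ is an $(\alpha-\theta)$-strongly convex \emph{quadratic} (part (a)), so for any $u$ the convex combination with weights $A_k/A_{k+1}$ and $a_k/A_{k+1}$ satisfies the exact identity
\[
\frac{A_k}{A_{k+1}}\gamma_k(y_k)+\frac{a_k}{A_{k+1}}\gamma_k(u)
= \gamma_k(\tx_k) + \frac{A_k\, a_k\,(\alpha-\theta)}{2A_{k+1}^2}\|u-y_k\|^2
\]
wait --- I should instead use the point $\tx_k = (A_k y_k + a_k x_k)/A_{k+1}$ and expand around it; because $\gamma_k$ is quadratic with Hessian $(\alpha-\theta)I$ plus its linear part, the combination $A_k\gamma_k(y_k)+a_k\gamma_k(u)$ equals $A_{k+1}\gamma_k(\tx_k)$ \emph{plus} a quadratic remainder in $u-x_k$ (coming from $y_k-x_k$ and $u-x_k$). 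The cleanest route is: first rewrite $A_k\gamma_k(y_k)+a_k\gamma_k(u)+\tfrac{\theta+\delta}2\|u-x_k\|^2$ by completing the square in $u$. Since the $u$-dependence is $a_k\gamma_k(u)+\tfrac{\theta+\delta}2\|u-x_k\|^2$, whose Hessian is $\big(a_k(\alpha-\theta)+\theta+\delta\big)I$, its minimizer over $u$ is exactly $x_{k+1}$ by Lemma~\ref{basics}(b) — \emph{this is the reason that particular $x_{k+1}$ formula was chosen}. So
\[
a_k\gamma_k(u)+\frac{\theta+\delta}2\|u-x_k\|^2
= a_k\gamma_k(x_{k+1})+\frac{\theta+\delta}2\|x_{k+1}-x_k\|^2
+ \frac{a_k(\alpha-\theta)+\theta+\delta}2\|u-x_{k+1}\|^2 ,
\]
which already produces the last term on the right-hand side of the claimed inequality, with the residual $u$-independent terms to be bounded below by $A_{k+1}\big[\lambda_k\phi(y_{k+1})+\tfrac{1-\kappa\alpha}2\|y_{k+1}-\tx_k\|^2\big]$.

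**Reducing to a $u$-free inequality.** After the step above, what remains to prove is
\[
A_k\gamma_k(y_k)+a_k\gamma_k(x_{k+1})+\frac{\theta+\delta}2\|x_{k+1}-x_k\|^2
\ \ge\ A_{k+1}\Big[\lambda_k\phi(y_{k+1})+\frac{1-\kappa\alpha}2\|y_{k+1}-\tx_k\|^2\Big].
\]
Here I would use $A_{k+1}=A_k+a_k$ and $A_{k+1}=a_k^2$ (from \eqref{eq:rel-theta}) to match coefficients. The term $A_k\gamma_k(y_k)+a_k\gamma_k(x_{k+1})$ should be compared with $A_{k+1}\gamma_k(v)$ for a suitable convex-combination point $v=(A_ky_k+a_kx_{k+1})/A_{k+1}$; by strong convexity of $\gamma_k$ in the ``$\ge$'' (Jensen) direction — or rather by the quadratic identity — we get $A_k\gamma_k(y_k)+a_k\gamma_k(x_{k+1}) \ge A_{k+1}\gamma_k(v)$, and the crucial point is that this $v$ should coincide with, or relate cleanly to, $\tx_k$ (using the definition of $\tx_k$ and the update giving $x_{k+1}$ versus $x_k$). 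Then Lemma~\ref{basics}(c), which says exactly $\min_u\{\gamma_k(u)+\tfrac{\theta+\delta}2\|u-\tx_k\|^2\} \ge \lambda_k\phi(y_{k+1})+\tfrac{1-\kappa\alpha}2\|y_{k+1}-\tx_k\|^2$, closes the estimate, provided the leftover $\tfrac{\theta+\delta}2\|x_{k+1}-x_k\|^2$ and $\tfrac{\theta+\delta}2$-type quadratic from completing the square combine to dominate $\tfrac{\theta+\delta}2\|v-\tx_k\|^2$. I expect the identity $v-\tx_k$ to be a scalar multiple of $x_{k+1}-x_k$ with the right constant so that these quadratic terms balance exactly.

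**Main obstacle.** The bookkeeping of the quadratic terms is where the real work lies: three separate ``$\|\cdot\|^2$'' contributions — the remainder from expanding $A_k\gamma_k(y_k)+a_k\gamma_k(x_{k+1})$ around $v$, the term $\tfrac{\theta+\delta}2\|x_{k+1}-x_k\|^2$, and the $\tfrac{\theta+\delta}2\|v-\tx_k\|^2$ needed to invoke Lemma~\ref{basics}(c) — must be shown to telescope correctly using $A_{k+1}=a_k^2$ and the precise definitions of $\tx_k$ and $x_{k+1}$. The danger is a sign or a factor-of-$A_{k+1}$ mismatch; the antidote is to carry everything in terms of the single vector $d:=x_{k+1}-x_k$ and verify that $y_k-\tx_k$, $x_{k+1}-\tx_k$, and $v-\tx_k$ are all explicit multiples of $d$ (via \eqref{eq:txk} and \eqref{eq:ak}), after which the inequality becomes a one-line comparison of scalar coefficients. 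I would also double-check that no convexity of $\phi$ itself is needed — consistent with the paper's fourth remark after the framework — since all convexity used is that of the quadratic $\gamma_k$, which holds by Lemma~\ref{basics}(a) regardless.
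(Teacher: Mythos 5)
Your argument is correct and is essentially the paper's proof with its two main steps permuted: the paper first applies Jensen's inequality to $\gamma_k$ at a general $u$ together with Lemma \ref{basics}(c), and only completes the square in $u$ at the very end via Lemma \ref{basics}(a)--(b), whereas you complete the square first (using Lemma \ref{basics}(b)) and then run the same Jensen/Lemma \ref{basics}(c) step at the point $u=x_{k+1}$. The coefficient balance you flag as the main obstacle does close exactly, since $v-\tx_k=\tfrac{a_k}{A_{k+1}}(x_{k+1}-x_k)$ and $a_k^2=A_{k+1}$ give $A_{k+1}\tfrac{\theta+\delta}{2}\|v-\tx_k\|^2=\tfrac{\theta+\delta}{2}\|x_{k+1}-x_k\|^2$.
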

\begin{proof}
      Using  the convexity of $\gamma_k$ (see Lemma \ref{basics}(a)), relations \eqref{eq:rel-theta} and \eqref{eq:txk} and Lemma \ref{basics}(c),
      we conclude for every $u \in \R^n $ that
	\begin{align*}
	A_k\gamma_k(y_k)& +a_k\gamma_k(u)+\frac{\theta+\delta}2\|u-x_k\|^2 \\
    & \ge  (A_k+a_k)\gamma_k\left( \frac{A_ky_k+a_ku}{A_k+a_k}  \right)
+\frac{(\theta+\delta)(A_k+a_k)^2}{2a_k^2}\left\| \frac{A_ky_k+a_ku}{A_k+a_k}-\frac{A_ky_k+a_kx_k}{A_k+a_k}\right\| ^2\\
	&= A_{k+1} \left \lbrace \gamma_k \left( \frac{A_ky_k+a_ku}{A_k+a_k} \right) +\frac{\theta+\delta}{2} \left\| \frac{A_ky_k+a_ku}{A_k+a_k}-\tx_k\right\| ^2 \right \rbrace \\
	&\ge A_{k+1} \min\left\lbrace  \gamma_k (\tilde u)+\frac{\theta+\delta}{2}\|\tilde u-\tilde x_k\|^2 :  \tilde u \in \R^n\right\rbrace \\
	&\ge A_{k+1} \left[\lam_k \phi(y_{k+1} )+\frac{1-\kappa \alpha}{2}\|y_{k+1}-\tilde x_k\|^2 \right].
	\end{align*}
      Noting that by Lemma \ref{basics}(a) the left hand side of the above inequality is an $[a_k(\alpha-\theta)+\theta+\delta]$-strongly convex quadratic function,
     the conclusion of the lemma  now follows from Lemma \ref{basics}(b) and the above inequality.
\end{proof}

\begin{lemma} \label{summable 2} Let $x_* \in X_*$ be given and define
	\begin{equation}\label{eta}
	\eta_k :=A_k\lam_k(\phi(y_k)-\phi_*)+\frac {\theta+\delta}2 \|x_k-x_*\|^2 \quad \forall k\ge0.
	\end{equation}
Then, for every $k \ge 0$, the following relations hold:
	\begin{align} 
	\frac{1-\kappa \alpha}{2}A_{k+1} \|y_{k+1} - \tx_k\|^2 & \le  \eta_k - \eta_{k+1}+A_k(\gamma_k(y_k)-\lam_k \phi(y_k))+a_k (\gamma_k (x_*)-\lam_k \phi(x_*)) \nonumber  \\
	\label{ineq:sum}
& \le \eta_k-\eta_{k+1}+\left( 1-\theta\right) a_k D^2 +\left( 1-\theta\right) \|x_k-x_*\|^2.
	\end{align}
\end{lemma}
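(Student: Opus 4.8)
The plan is to derive the first inequality in \eqref{ineq:sum} by specializing Lemma \ref{prep} to $u = x_*$ and then rearranging, using the definition \eqref{eta} of $\eta_k$. Concretely, from Lemma \ref{prep} with $u = x_*$ we have
\[
A_k \gamma_k(y_k) + a_k \gamma_k(x_*) + \frac{\theta+\delta}{2}\|x_*-x_k\|^2 \ge A_{k+1}\left[\lam_k \phi(y_{k+1}) + \frac{1-\kappa\alpha}{2}\|y_{k+1}-\tx_k\|^2\right] + \frac{a_k(\alpha-\theta)+\theta+\delta}{2}\|x_*-x_{k+1}\|^2.
\]
First I would observe that $A_k\gamma_k(y_k) = A_k\lam_k\phi(y_k) + A_k(\gamma_k(y_k)-\lam_k\phi(y_k))$ and likewise $a_k\gamma_k(x_*) = a_k\lam_k\phi(x_*) + a_k(\gamma_k(x_*)-\lam_k\phi(x_*)) = a_k\lam_k\phi_* + a_k(\gamma_k(x_*)-\lam_k\phi(x_*))$, since $x_*\in X_*$. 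Substituting these, moving the "correction" terms $A_k(\gamma_k(y_k)-\lam_k\phi(y_k))$ and $a_k(\gamma_k(x_*)-\lam_k\phi(x_*))$ to the right-hand side, and recalling $A_{k+1}=A_k+a_k$ (so that $A_{k+1}\lam_k\phi(y_{k+1}) - A_k\lam_k\phi(y_k) - a_k\lam_k\phi_* = [A_{k+1}\lam_k(\phi(y_{k+1})-\phi_*)] - [A_k\lam_k(\phi(y_k)-\phi_*)]$), the main parts of $\eta_{k+1}$ and $\eta_k$ appear. The remaining book-keeping is to check that the quadratic coefficient $a_k(\alpha-\theta)+\theta+\delta$ on the right dominates $\theta+\delta$, which holds since $\alpha > \theta$ and $a_k \ge 1$; this lets us lower-bound $\frac{a_k(\alpha-\theta)+\theta+\delta}{2}\|x_*-x_{k+1}\|^2$ by $\frac{\theta+\delta}{2}\|x_*-x_{k+1}\|^2$, which is exactly the quadratic term in $\eta_{k+1}$. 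Collecting terms then yields the first inequality of \eqref{ineq:sum}.

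For the second inequality, I would bound the two correction terms separately using Lemma \ref{basics}(d), which gives $\gamma_k(u) - \lam_k\phi(u) \le (1-\theta)\|u-\tx_k\|^2/2$ for every $u \in \dom h$. Applying this with $u = y_k$ (note $y_k \in \dom h$, which holds since $y_0\in\dom h$ and each $y_{k+1}$ lies in $\dom\phi=\dom h$ by \eqref{subdiff}) gives $A_k(\gamma_k(y_k)-\lam_k\phi(y_k)) \le A_k(1-\theta)\|y_k-\tx_k\|^2/2$, and applying it with $u=x_*$ gives $a_k(\gamma_k(x_*)-\lam_k\phi(x_*)) \le a_k(1-\theta)\|x_*-\tx_k\|^2/2$. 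The remaining task is to bound $A_k\|y_k-\tx_k\|^2 + a_k\|x_*-\tx_k\|^2$ by something of the form $2a_kD^2 + 2\|x_k-x_*\|^2$. Here I would use the convex-combination formula \eqref{eq:txk}, $\tx_k = \frac{A_k}{A_{k+1}}y_k + \frac{a_k}{A_{k+1}}x_k$, so that $y_k - \tx_k = \frac{a_k}{A_{k+1}}(y_k - x_k)$ and $x_* - \tx_k = \frac{A_k}{A_{k+1}}(x_*-y_k) + \frac{a_k}{A_{k+1}}(x_*-x_k)$; since $y_k$ and $x_k$ lie in $\dom h$ (the latter because each $x_{k+1}$ from \eqref{def:x+ FISTA} is a minimizer over $\R^n$ of a function whose finite part includes $h$, or equivalently $x_{k+1}\in\dom h$), the terms $\|y_k - x_k\|$ and $\|x_* - y_k\|$ are at most $D$. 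Convexity of $\|\cdot\|^2$ applied to the second expression, combined with $A_k/A_{k+1}, a_k/A_{k+1} \le 1$ and the relation $a_k^2 = A_{k+1}$ from \eqref{eq:rel-theta}, should reduce everything to the claimed bound after elementary estimates.

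The main obstacle I anticipate is the second step: getting the quadratic terms to collapse cleanly into the stated right-hand side $(1-\theta)a_kD^2 + (1-\theta)\|x_k-x_*\|^2$ without an extra multiplicative constant. In particular, the term $a_k\|x_*-\tx_k\|^2$ must be handled so that its $\|x_*-x_k\|^2$ contribution comes out with coefficient exactly $(1-\theta)$ rather than something larger; this requires exploiting $a_k^2 = A_{k+1} = A_k + a_k$ so that $a_k \cdot (a_k/A_{k+1})^2 = a_k/A_{k+1} \le 1$, and similarly controlling the cross/$\|x_*-y_k\|^2$ contributions by the $D^2$ term. The rest (applying Lemma \ref{prep}, rearranging into $\eta_k - \eta_{k+1}$, invoking Lemma \ref{basics}(d)) is routine algebra. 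I would also double-check the membership claims $y_k, x_k \in \dom h$ up front, since they are needed both to apply Lemma \ref{basics}(d) and to bound the relevant distances by $D$.
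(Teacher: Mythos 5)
Your overall route is the same as the paper's: the first inequality comes from Lemma \ref{prep} with $u=x_*$, the identity $A_{k+1}=A_k+a_k$, $\phi(x_*)=\phi_*$, and the coefficient comparison $a_k(\alpha-\theta)+\theta+\delta\ge\theta+\delta$; the second comes from Lemma \ref{basics}(d) applied at $y_k$ and $x_*$, followed by the substitutions $y_k-\tx_k=\tfrac{a_k}{A_{k+1}}(y_k-x_k)$ and $x_*-\tx_k=\tfrac{A_k}{A_{k+1}}(x_*-y_k)+\tfrac{a_k}{A_{k+1}}(x_*-x_k)$ and the relation $a_k^2=A_{k+1}$. That part is fine.

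There is, however, one genuine error in your plan for the second inequality: the claim that $x_k\in\dom h$, which you use to assert $\|y_k-x_k\|\le D$. Your justification --- that $x_{k+1}$ minimizes ``a function whose finite part includes $h$'' --- is wrong: by Lemma \ref{basics}(b), $x_{k+1}$ is the \emph{unconstrained} minimizer of $a_k\gamma_k(u)+(\theta+\delta)\|u-x_k\|^2/2$, and $\gamma_k$ defined in \eqref{eq:gammak} is a strongly convex \emph{quadratic} in which $h$ enters only through the constant $\tilde\phi_k(y_{k+1})$. Hence $x_{k+1}$ is given by the closed-form affine expression \eqref{def:x+ FISTA} and need not lie in $\dom h$. (This is precisely why the lemma's right-hand side retains the term $(1-\theta)\|x_k-x_*\|^2$ rather than another $D^2$, and why the paper devotes a separate lemma, the one establishing \eqref{ineq:dkd0}, to showing $\{x_k\}$ is bounded --- both would be pointless if $x_k\in\dom h$.) The fix is what the paper does: do not bound $\|y_k-x_k\|$ by $D$; instead use $\tfrac12\|y_k-x_k\|^2\le\|y_k-x_*\|^2+\|x_k-x_*\|^2$, bound only $\|y_k-x_*\|$ by $D$ (both points are in $\dom h$), and carry $\|x_k-x_*\|^2$ through. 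The coefficients then collapse exactly as you hoped: the $\|x_k-x_*\|^2$ contributions are $(1-\theta)\tfrac{A_k}{A_{k+1}}$ and $(1-\theta)\tfrac{a_k}{A_{k+1}}$, summing to $1-\theta$, and the $D^2$ contributions sum to $(1-\theta)\tfrac{A_ka_k}{A_{k+1}}\le(1-\theta)a_k$, using $a_k^2=A_{k+1}$ throughout. With that repair your argument coincides with the paper's proof.
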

\begin{proof}
        The first inequality in \eqref{ineq:sum} follows immediately from Lemma \ref{prep} with $u=x_*$, the assumption that $0<\theta< \alpha$, $ \delta\ge0 $ and some simple algebraic manipulations.
	Using Lemma \ref{basics}(d), the definition of $ \tx_k $ in \eqref{eq:txk}, the Cauchy-Schwartz inequality, boundedness of $ \dom h $ and \eqref{eq:rel-theta}, we
conclude that
	\[
	\begin{aligned}
	A_k(\gamma_k(y_k)-\lam_k\phi(y_k))&\le \frac {( 1-\theta)A_k}2\|y_k-\tilde x_k\|^2= \frac{( 1-\theta)A_k}2\cdot\frac{a_k^2}{A_{k+1}^2}\|y_k-x_k\|^2\\
	&\le \frac{(1-\theta)A_ka_k^2}{A_{k+1}^2}(\|y_k-x_*\|^2+\|x_k-x_*\|^2)\\
	&\le \frac{(1-\theta)A_ka_k^2}{A_{k+1}^2} D^2+\left( 1-\theta\right)\frac{A_k}{A_{k+1}}\|x_k-x_*\|^2
	\end{aligned}
	\]
	and
	\[
	\begin{aligned}
	a_k(\gamma(x_*)-\lam_k\phi(x_*))&\le \frac {( 1-\theta)a_k}2\|x_*-\tilde x_k\|^2\\
	&= \frac {( 1-\theta)a_k}2 \left \|\frac{A_k}{A_{k+1}}(y_k-x_*)+\frac{a_k}{A_{k+1}}(x_k-x_*) \right \|^2\\
	&\le \left( 1-\theta\right)a_k\left( \frac{A_k^2}{A_{k+1}^2}\|y_k-x_*\|^2+\frac{a_k^2}{A_{k+1}^2}\|x_k-x_*\|^2\right)\\
	&\le \frac{(1-\theta)A_k^2a_k}{A_{k+1}^2} D^2+  \left(1-\theta\right)  \frac{a_k}{A_{k+1}}\|x_k-x_*\|^2.
	\end{aligned}
	\]
	The second inequality in \eqref{ineq:sum} now follows from the first one in  \eqref{ineq:sum}, the above two inequalities and relation \eqref{eq:Ak}.
\end{proof}

In view of \eqref{subdiff} and assumption (A3), the sequence $\{y_k\}$ is bounded.
The next result shows that $\{x_k\}$ is bounded, and hence that
$\{\tx_k\}$ is also bounded in view of \eqref{eq:txk}.

\begin{lemma}
Let $\beta$ and $\tau_0$ be defined as in \eqref{eq:beta,tauk}. Then,  $\tau_0< 1$ and, for every $\bar x \in \dom h$, we have
\begin{equation}\label{ineq:dkd0}
	\|x_k-\bar x\| \le \tau_0^{k}\|x_0-\bar x\| +\frac{\beta}{1-\tau_0}D \quad \forall k \ge 1.
	\end{equation}
where $D$ is as in (A3). As a consequence, $\{x_k\}$ is bounded.
\end{lemma}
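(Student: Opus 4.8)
The plan is to derive a recursive inequality of the form $\|x_{k+1}-\bar x\| \le \tau_0 \|x_k - \bar x\| + cD$ for a suitable constant $c$, and then unroll it via the standard geometric-series argument. To get such a recursion, I would start from Lemma~\ref{summable 2}. The key telescoping mechanism: summing the chain of inequalities \eqref{ineq:sum} from $0$ to $k-1$ and discarding the nonnegative left-hand sides yields
\[
\eta_k \le \eta_0 + (1-\theta) D^2 \sum_{i=0}^{k-1} a_i + (1-\theta)\sum_{i=0}^{k-1}\|x_i-x_*\|^2,
\]
but the appearance of $\sum \|x_i-x_*\|^2$ on the right makes a naive bound circular. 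Instead I expect the correct route is to use the single-step inequality \eqref{ineq:sum} more carefully, keeping the $(\theta+\delta)\|x_{k+1}-x_*\|^2/2$ term inside $\eta_{k+1}$ and absorbing the $(1-\theta)\|x_k-x_*\|^2$ error term into $\eta_k$ with a controlled coefficient. Since $\eta_k \ge \frac{\theta+\delta}{2}\|x_k-x_*\|^2$ (the $\phi(y_k)-\phi_*$ term is nonnegative), one can write $(1-\theta)\|x_k-x_*\|^2 \le \frac{2(1-\theta)}{\theta+\delta}\eta_k$, and then also note $A_k \lambda_k(\phi(y_k)-\phi_*)$ needs a matching bound — here one can use that $\phi(y_k)-\phi_* \le $ (Lipschitz-type bound) $\cdot D$ or simply that it is $O(D^2/\lambda_k)\cdot A_k$; more likely the intended argument bounds $A_k\lambda_k(\phi(y_k)-\phi_*)$ crudely by its relation to the quadratic terms already present, giving $\eta_{k+1} \le \tau_0^2 \eta_k + (\text{const})\cdot a_k D^2$ after identifying $\tau_0^2 = (\kappa\alpha+\delta)/(\alpha+\delta)$ from the coefficient ratio $(a_k(\alpha-\theta)+\theta+\delta)$ versus the $(1-\kappa\alpha)$-weighted drop.

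Concretely, I would isolate from Lemma~\ref{summable 2} the inequality, valid for each $k$,
\[
\eta_{k+1} + \tfrac{1-\kappa\alpha}{2}A_{k+1}\|y_{k+1}-\tx_k\|^2 \le \eta_k + (1-\theta)a_k D^2 + (1-\theta)\|x_k-x_*\|^2,
\]
drop the second term on the left, substitute $(1-\theta)\|x_k-x_*\|^2 \le \frac{2(1-\theta)}{\theta+\delta}\cdot\frac{\theta+\delta}{2}\|x_k-x_*\|^2 \le \frac{2(1-\theta)}{\theta+\delta}\eta_k$, and arrive at $\eta_{k+1}\le \big(1 + \frac{2(1-\theta)}{\theta+\delta}\big)\eta_k + (1-\theta)a_kD^2$. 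Since $a_k^2 = A_{k+1}$ and $A_{k+1}\lambda_k(\phi(y_k)-\phi_*)$ contributes, this requires a finer grouping; the cleaner path is probably to work directly with $\|x_{k+1}-x_*\|$ by using Lemma~\ref{prep} with $u=x_*$ in the form that isolates $\frac{a_k(\alpha-\theta)+\theta+\delta}{2}\|x_*-x_{k+1}\|^2$ against $\frac{\theta+\delta}{2}\|x_*-x_k\|^2$ plus the $\gamma_k(y_k)-\lambda_k\phi(y_k)$ and $\gamma_k(x_*)-\lambda_k\phi(x_*)$ terms, the latter two being $O(a_kD^2)$ by Lemma~\ref{basics}(d) and boundedness of $\dom h$. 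Taking square roots and using $\sqrt{p+q}\le\sqrt p+\sqrt q$ gives $\|x_{k+1}-x_*\| \le \sqrt{\tfrac{\theta+\delta}{a_k(\alpha-\theta)+\theta+\delta}}\,\|x_k-x_*\| + O(\sqrt{a_k/(a_k(\alpha-\theta)+\theta+\delta)})D$. Because $a_k\ge 1$ for all $k$, the contraction factor is at most $\sqrt{(\theta+\delta)/(\alpha-\theta+\theta+\delta)} = \sqrt{(\theta+\delta)/(\alpha+\delta)}$ — and one checks this is dominated by $\tau_0 = \sqrt{(\kappa\alpha+\delta)/(\alpha+\delta)}$ only if $\theta \le \kappa\alpha$, which may or may not hold; more robustly, one uses the $(1-\kappa\alpha)$ version of the recursion where the relevant ratio genuinely is $\tau_0^2$. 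The bound $\tau_0<1$ itself is immediate since $\kappa<1$ forces $\kappa\alpha+\delta < \alpha+\delta$.

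Once the one-step contraction $\|x_{k+1}-\bar x\| \le \tau_0\|x_k-\bar x\| + \beta D$ (for $\bar x=x_*$, then extended to general $\bar x\in\dom h$ by the triangle inequality with a diameter-$D$ adjustment, which is where the precise constant $\beta = 3 + 4(\theta+\delta)/(\alpha-\theta)$ gets pinned down) is in hand, unrolling gives
\[
\|x_k-\bar x\| \le \tau_0^k\|x_0-\bar x\| + \beta D\sum_{j=0}^{k-1}\tau_0^j \le \tau_0^k\|x_0-\bar x\| + \frac{\beta}{1-\tau_0}D,
\]
which is exactly \eqref{ineq:dkd0}; boundedness of $\{x_k\}$ follows since the right side is bounded in $k$. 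The main obstacle I anticipate is the bookkeeping in step two: correctly handling the $A_k\lambda_k(\phi(y_k)-\phi_*)$ term inside $\eta_k$ — it is nonnegative but grows, so it cannot simply be dropped on the right and must be shown to be controlled by the quadratic terms (using that $\phi$ restricted to $\dom h$ has values within $O(D^2/\lambda_k + MD^2)$ of $\phi_*$, or by a more structural cancellation), and extracting precisely the constants $\beta$ and $\tau_0$ as defined in \eqref{eq:beta,tauk} rather than merely order-of-magnitude versions.
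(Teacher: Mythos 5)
There is a genuine gap here. Your proposal never touches the two ingredients the paper's proof actually rests on: the explicit update formula \eqref{def:x+ FISTA} for $x_{k+1}$ and the error bound \eqref{ineq:frame}. The paper obtains the one-step contraction purely algebraically: it rewrites $[\alpha-\theta+(\theta+\delta)/a_k](x_{k+1}-\bar x)$ via \eqref{def:x+ FISTA}, \eqref{eq:txk} and \eqref{eq:rel-theta} as a combination of $-\tv_{k+1}-\delta(y_{k+1}-\tx_k)$, $(\alpha+\delta)(y_{k+1}-\bar x)$ and $-\tfrac{A_k(\theta+\delta)}{A_{k+1}}(y_k-\bar x)$, bounds the first of these by $\sqrt{(\kappa\alpha+\delta)(\alpha+\delta)}\,(2D+\tfrac{a_k}{A_{k+1}}\|x_k-\bar x\|)$ using \eqref{ineq:frame} and the diameter bound, and bounds the other two by $O(D)$ since $y_k,y_{k+1},\bar x\in\dom h$. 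Dividing by $(\alpha-\theta)a_k+\theta+\delta\ge\alpha+\delta$ (using $a_k\ge 1$) yields exactly the coefficient $\sqrt{(\kappa\alpha+\delta)(\alpha+\delta)}/(\alpha+\delta)=\tau_0$ on $\|x_k-\bar x\|$ and the constant $\beta$ of \eqref{eq:beta,tauk} on $D$. This is where $\tau_0$ comes from, and it is a pointwise argument about the iterates, not a potential-function argument.

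Your route through Lemmas \ref{prep} and \ref{summable 2} does not close, for the reasons you yourself half-flag. First, the contraction factor it produces is $\sqrt{(\theta+\delta)/(a_k(\alpha-\theta)+\theta+\delta)}\le\sqrt{(\theta+\delta)/(\alpha+\delta)}$, which is not $\tau_0$ and need not be dominated by it; your fallback ("use the $(1-\kappa\alpha)$ version where the ratio genuinely is $\tau_0^2$") is asserted, not derived, and I do not see how $\kappa$ enters any coefficient of $\|x_{k+1}-x_*\|^2$ in that chain. Second, the right-hand side of \eqref{ineq:sum} carries the extra term $(1-\theta)\|x_k-x_*\|^2$ with coefficient of order $1$ (not $1/a_k$ after normalization), so the combined coefficient on $\|x_k-x_*\|^2$ need not be below $1$ and no net contraction follows. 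Third, the growing term $A_k\lam_k(\phi(y_k)-\phi_*)$ inside $\eta_k$ cannot be dropped on the right nor absorbed, as you note, and you do not resolve it. Note also that the logical order in the paper is the reverse of yours: this boundedness lemma is proved first, independently, and is then fed \emph{into} Lemma \ref{prop:key} to control $\sum_i\|x_i-x_*\|^2$; trying to extract boundedness back out of the potential inequality is essentially circular. Your final unrolling step and the observation that $\tau_0<1$ are fine, but the one-step contraction they depend on is not established.
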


\begin{proof}
The conclusion that $\tau_0<1$ follows from the fact that $\alpha>0$, $\delta>0$ and $\kappa<1$ in view of step 0 of the GAIPP framework.
We now claim that for every $\bar x \in \dom h$,
\begin{equation}\label{ineq:dk}
\|x_{k}-\bar x\| \le \tau_0
\|x_{k-1}-\bar x\| +\beta D \quad \forall k \ge 1,
\end{equation}
which can be easily seen to imply \eqref{ineq:dkd0}.
Let $ \sigma=\kappa\alpha+\delta $. To show the claim, first note that \eqref{ineq:frame}, the definitions of $D$ and $\tx_k$ in (A3) and \eqref{eq:txk}, respectively, the fact that $\bar x \in \dom h$ and $\{y_k\} \subset \dom h$,
and relations \eqref{eq:Ak} and  \eqref{eq:rel-theta}, imply that
	\begin{equation}\label{ineq:tvk}
	\begin{aligned}
	\|\tv_{k+1}+\delta(y_{k+1}-\tx_k)\|
	&\le \sqrt{\sigma(\alpha+\delta)}\|y_{k+1}-\tx_k\| \\
&\le  \sqrt{\sigma (\alpha+\delta)}( \|y_{k+1}-\bar x\|+\|\tx_k-\bar x\|)\\
	&\le  \sqrt{\sigma (\alpha+\delta)}\left( D+\frac{A_k}{A_{k+1}}D+\frac{a_k}{A_{k+1}}\|x_{k}-\bar x\| \right) \\
	& \le \sqrt{\sigma (\alpha+\delta)}\left( 2D+\frac{a_k}{A_{k+1}}\|x_{k}-\bar x\| \right).
	\end{aligned}
	\end{equation}
	From relations \eqref{eq:txk}, \eqref{def:x+ FISTA} and \eqref{eq:rel-theta}, we have
	\begin{equation}\label{eq:bar x}
	\begin{aligned}
	[\alpha-&\theta+(\theta+\delta)/a_k](x_{k+1}-\bar x) \\
	=&-\tv_{k+1}-\delta(y_{k+1}-\tx_k)+\delta\left( \frac{1}{a_k} - \frac{a_k}{A_{k+1}}\right)(x_k-\bar x) \\
	&+(\alpha+\delta)(y_{k+1}-\bar x) -\left( \theta-\frac{\theta}{a_k}+\frac{A_k}{A_{k+1}}\delta \right)(y_k-\bar x) \\
	=&-\tv_{k+1}-\delta(y_{k+1}-\tx_k)+(\alpha+\delta)(y_{k+1}-\bar x)-\frac{A_k(\theta+\delta)}{A_{k+1}}(y_k-\bar x),
	\end{aligned}
	\end{equation}
	Using relations \eqref{ineq:tvk} and \eqref{eq:bar x}, the definition of $D$ in (A3),  and the fact that $\bar x \in \dom h$ and $\{y_k\} \subset \dom h$, we conclude that
	\[
	\begin{aligned}
	& \|x_{k+1}-\bar x\| \\
&\le \frac{1}{\alpha-\theta+(\theta+\delta)/a_k}\left[ \|\tv_{k+1}+\delta(y_{k+1}-\tx_k)\| + (\alpha+\delta)\|y_{k+1}-\bar x\| + \frac{A_k(\theta+\delta)}{A_{k+1}}\|y_k-\bar x\| \right] \\
	&\le \frac{1}{\alpha-\theta+(\theta+\delta)/a_k}\left[ \sqrt{\sigma(\alpha+\delta)}\left( 2D + \frac{a_k}{A_{k+1}}\|x_k-\bar x\| \right) + (\alpha+\delta)D + (\theta+\delta)D \right] \\
	&\le \frac{2\sqrt{\sigma(\alpha+\delta)}+\alpha+2\delta+\theta}{\alpha-\theta+(\theta+\delta)/a_k} D + \frac{\sqrt{\sigma(\alpha+\delta)} a_k}{[\alpha-\theta+(\theta+\delta)/a_k]A_{k+1}}\|x_k-\bar x\|\\
	&\le \beta D + \frac{\sqrt{\sigma(\alpha+\delta)} }{(\alpha-\theta)a_k+\theta+\delta}\|x_k-\bar x\|,
	\end{aligned}
	\]
where the last inequality is due to \eqref{eq:rel-theta},  the definition of $\beta$ in \eqref{eq:beta,tauk} and the local definition of $ \sigma$.
The claim \eqref{ineq:dk} now follows from the previous inequality,
 the fact that $a_0=1$ and $\{a_k\}$ is increasing, $ \sigma=\kappa\alpha+\delta $ and the definition of $\tau_0$ in
\eqref{eq:beta,tauk}.
\end{proof}

\noindent


\begin{lemma}\label{prop:key}
For every $ k\ge0 $,
	\[
	\frac{1-\kappa \alpha}{2}\sum_{i=0}^{k-1}A_{i+1}\|\tilde x_i-y_{i+1}\|^2
	\le \left[ \frac{\theta+\delta}{2} +\left(1-\theta\right)  \frac{2\beta^2 k}{(1-\tau_0)^2}
	+ \left( 1-\theta\right)\sum_{i=0}^{k-1}a_i\right] D^2.
	\]
\end{lemma}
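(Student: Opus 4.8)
The plan is to sum the second inequality in~\eqref{ineq:sum} of Lemma~\ref{summable 2} over $i=0,\dots,k-1$ and then estimate the three terms that appear on the right-hand side. Since the differences $\eta_i-\eta_{i+1}$ telescope, this summation yields
\[
\frac{1-\kappa\alpha}{2}\sum_{i=0}^{k-1}A_{i+1}\|\tx_i-y_{i+1}\|^2
\;\le\;(\eta_0-\eta_k)\;+\;(1-\theta)\,D^2\sum_{i=0}^{k-1}a_i\;+\;(1-\theta)\sum_{i=0}^{k-1}\|x_i-x_*\|^2 ,
\]
so it suffices to bound $\eta_0-\eta_k$ by $\tfrac{\theta+\delta}{2}D^2$ and $\sum_{i=0}^{k-1}\|x_i-x_*\|^2$ by $\tfrac{2\beta^2 k}{(1-\tau_0)^2}D^2$.

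For the telescoped term I would first observe that $\eta_k\ge0$ for every $k$: in the definition~\eqref{eta}, the summand $A_k\lambda_k(\phi(y_k)-\phi_*)$ is nonnegative because $A_k,\lambda_k\ge0$, $y_k\in\dom h$, and $\phi_*$ is the optimal value of~\eqref{eq:ProbIntro}, while $\tfrac{\theta+\delta}{2}\|x_k-x_*\|^2\ge0$ since $\theta>0$ and $\delta\ge0$. On the other hand, $A_0=0$ forces $\eta_0=\tfrac{\theta+\delta}{2}\|x_0-x_*\|^2$, and $\|x_0-x_*\|\le D$ because $x_0\in\dom h$, $x_*\in X_*\subseteq\dom h$, and (A3). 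Hence $\eta_0-\eta_k\le\eta_0\le\tfrac{\theta+\delta}{2}D^2$.

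For the last sum I would invoke the boundedness estimate~\eqref{ineq:dkd0} with $\bar x=x_*$. Using $\tau_0^i\le1$ and $\|x_0-x_*\|\le D$, it gives $\|x_i-x_*\|\le D+\tfrac{\beta}{1-\tau_0}D$ for all $i\ge0$; since $\beta\ge3$ and $1-\tau_0\le1$ we have $\beta/(1-\tau_0)\ge3$, so this quantity is at most $\tfrac{4}{3}\cdot\tfrac{\beta}{1-\tau_0}D$, and squaring (with $16/9\le2$) yields $\|x_i-x_*\|^2\le\tfrac{2\beta^2}{(1-\tau_0)^2}D^2$. Summing over $i=0,\dots,k-1$ gives the desired bound $\sum_{i=0}^{k-1}\|x_i-x_*\|^2\le\tfrac{2\beta^2 k}{(1-\tau_0)^2}D^2$, and substituting the two bounds into the displayed inequality produces exactly the claim.

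There is no substantive obstacle: the argument is essentially just a telescoping sum plus the two already-proved estimates. The only places requiring a little care are purely arithmetic, namely verifying $\eta_k\ge0$ so that dropping $-\eta_k$ is legitimate, and choosing the numerical constant in the bound on $\|x_i-x_*\|^2$, where one must use $\beta\ge3$ to absorb the leftover $D$ coming from the $\tau_0^i\|x_0-x_*\|$ term in~\eqref{ineq:dkd0}.
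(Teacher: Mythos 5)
Your proposal is correct and follows essentially the same route as the paper: sum the second inequality of \eqref{ineq:sum}, telescope, use $\eta_k\ge0$ and $\eta_0\le(\theta+\delta)D^2/2$, and bound $\sum_{i=0}^{k-1}\|x_i-x_*\|^2$ via \eqref{ineq:dkd0}. The only (immaterial) difference is arithmetic: you absorb the leftover $D$ termwise using $\beta\ge3$, whereas the paper expands $(\tau_0^i\|x_0-x_*\|+\beta D/(1-\tau_0))^2$ with $(a+b)^2\le2(a^2+b^2)$ and sums the geometric series; both yield the stated constant $2\beta^2 k/(1-\tau_0)^2$.
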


\begin{proof}
Using  \eqref{ineq:dkd0} for $ \bar x=x_* \in X_* $, the fact $ \|x_0-x_*\|\le D $, the inequality $(a_1+a_2)^2 \le 2 (a_1^2 + a_2^2)$ and definitions of $ \beta, \tau_0 $ in \eqref{eq:beta,tauk}, we have
	\[
	\begin{aligned}
	\sum_{i=0}^{k-1}\|x_i-x_*\|^2&\le \|x_0-x_*\|^2+\sum_{i=1}^{k-1}\left( \tau_0^{i}\|x_0-x_*\|+\frac{\beta}{1-\tau_0} D\right) ^2\\
	&\le D^2+2D^2\sum_{i=1}^{k-1}\left( \tau_0^{2i}+\left( \frac{\beta}{1-\tau_0}\right) ^2 \right) \\
	&\le  \frac{2}{1-\tau_0^2} D^2+2\left( \frac{\beta}{1-\tau_0} \right)^2D^2(k-1)
	\le \frac{2\beta^2 k}{(1-\tau_0)^2}D^2. 
	\end{aligned}
	\]
	Moreover,  \eqref{eta} and the facts that $x_* \in X_*$ and $A_0=0$ imply that $\eta_k \ge 0$ and $\eta_0=(\theta+\delta)\|x_0-x_*\|^2/2\le (\theta+\delta) D^2/2$.
         Thus, adding \eqref{ineq:sum} from 0 to $ k-1 $ and using the two previous observations, we obtain
	\[
	\begin{aligned}
	\frac{1-\kappa \alpha}{2} \sum_{i=0}^{k-1}A_{i+1}\|\tilde x_i-y_{i+1}\|^2
	&\le
	\eta_0-\eta_k+\left( 1-\theta\right)D^2\sum_{i=0}^{k-1}a_i + (1-\theta)\sum_{i=0}^{k-1}\|x_i-x_*\|^2\\
	&\le \left[ \frac{\theta+\delta}{2}+ \left( 1-\theta\right)\sum_{i=0}^{k-1}a_i + (1-\theta)\frac{2\beta^2 k}{(1-\tau_0)^2}
	\right] D^2
	\end{aligned}
	\]
which clearly yields the conclusion of the lemma.
\end{proof}

Theorem \ref{prop:main} now follows immediately from Lemma \ref{prop:key}.

\noindent
\section{Accelerated gradient methods}\label{sec:method}

This section presents the D-AIPP method, an instance of the GAIPP framework in which the triple $(y_k,\tv_k, \tilde \varepsilon_k)$ satisfying \eqref{subdiff} and \eqref{ineq:frame} are 
obtained by an ACG variant applied to a certain composite strongly convex minimization subproblem.
It contains two subsections.
The first subsection  reviews the ACG variant and its main properties, and derives a useful result related to the complexity of
finding the aforementioned triple.
The second one presents the D-AIPP method for solving \eqref{eq:ProbIntro} and derives its corresponding  iteration-complexity bound.

\subsection{ACG  method for solving composite convex optimization}\label{subsec:ACG}
In this section we recall an accelerated gradient method as well as some of its properties when applied for solving the following optimization problem
\begin{equation}\label{mainprob:nesterov}
\min \{\psi(z):=\psi_s(z)+\psi_n(z) :   z\in \R^n\}
\end{equation}
where the following conditions hold:
\begin{itemize}
	\item [(B1)]$\psi_n:\r^n\rightarrow (-\infty,+\infty]$ is a proper, closed and $\mu$-strongly convex  function with $\mu\geq 0$;
	\item [(B2)]$\psi_s$ is a convex differentiable function on a closed convex set $ \Omega \supseteq \dom \psi_n $, whose gradient is $L$-Lipschitz continuous on $ \Omega $.
\end{itemize}

The accelerated composite gradient variant (\cite{YHe2,nesterov2012gradient,nesterov1983}) for solving \eqref{mainprob:nesterov} is as follows.

\noindent\rule[0.5ex]{1\columnwidth}{1pt}

\textbf{Accelerated Composite Gradient (ACG) Method}

\noindent\rule[0.5ex]{1\columnwidth}{1pt}
\begin{itemize}
	\item[0.]  Let a pair of functions $(\psi_s,\psi_n)$  be as in \eqref{mainprob:nesterov} 
	and initial point $z_{0}\in \R^n $ be given, and set $y_{0}=z_{0}=P_{\Omega}(z_0)$, $B_{0}=0$, $\Gamma_0\equiv0$ and $j=0$; 
	\item[1.]  compute
	\begin{align*}
	B_{j+1}  &=B_{j}+\frac{\mu B_{j}+1+\sqrt{(\mu B_{j}+1)^2+4L(\mu B_{j}+1)B_{j}}}{2L},\\
	\tilde{z}_{j}  &=\frac{B_{j}}{B_{j+1}}z_{j}+\frac{B_{j+1}-B_{j}}{B_{j+1}}y_{j},\quad\Gamma_{j+1}=\frac{B_{j}}{B_{j+1}}\Gamma_j+\frac{B_{j+1}-B_{j}}{B_{j+1}}l_{\psi_s}(\cdot,\tilde z_j),\\
	y_{j+1} &=\argmin_{y}\left\{ \Gamma_{j+1}(y)+\psi_n(y)+\frac{1}{2B_{j+1}}\|y-y_{0}\|^{2}\right\},\\
	z_{j+1} & =\frac{B_{j}}{B_{j+1}}z_{j}+\frac{B_{j+1}-B_{j}}{B_{j+1}}y_{j+1},
	\end{align*}
	\item[2.] compute 
	\begin{align*}
	u_{j+1}&=\frac{y_0-y_{j+1}}{B_{j+1}},\\[2mm]
	\eta_{j+1}&= \psi(z_{j+1})-\Gamma_{j+1}(y_{j+1})- \psi_n(y_{j+1})-\langle u_{j+1},z_{j+1}-y_{j+1}\rangle;   
	\end{align*}
	\item[3.]  set $j\leftarrow j+1$ and go to step 1. 
\end{itemize}
\noindent\rule[0.5ex]{1\columnwidth}{1pt}

Some remarks about the ACG method follow. First, the main core and usually the common way of describing an iteration of the ACG method is as in step~1.
Second, the
extra sequences $\{u_j\}$ and $\{\eta_j\}$ computed  in step~2 will be used to develop a stopping criterion for the ACG method when the latter
is called as a subroutine in the context of
the D-AIPP method stated in Subsection~\ref{sec:D-AIPP}.
Third, the ACG method in which  $\mu=0$  is a special case of a slightly more general one studied by Tseng in \cite{tseng2008accmet} (see Algorithm~3
of \cite{tseng2008accmet}).
The analysis of the general case of the ACG method in which $\mu\geq0$ was studied  in  \cite[Proposition~2.3]{YHe2}. 

The next proposition summarizes the basic properties of the ACG method.

\begin{proposition}\label{prop1:nesterov} Let $\{(B_j,\Gamma_j,z_j,u_j,\eta_j)\}$     be the sequence generated by the ACG method applied to \eqref{mainprob:nesterov}
where $(\psi_s,\psi_n)$ is a given pair of data functions satisfying (B1) and (B2) with $\mu \geq 0$. Then,  the following statements hold
\begin{itemize}
\item[(a)] for every $j\ge1 $, we have $\Gamma_j\leq \psi_s$ and 
\[
\psi(z_j)  \leq\min_{x}\left\{\Gamma_{j}(z)+\psi_n(z)+\frac{1}{2B_{j}}\|z-z_{0}\|^{2}\right\}
\]
\begin{equation}\label{ineq:increasingA_k}
B_{j}\geq\frac{1}{L}\max\left\{\frac{j^{2}}{4},\left(1+\sqrt{\frac{\mu}{4L}}\right)^{2(j-1)}\right\};
\end{equation}
\item[(b)] for every solution $z^*$  of \eqref{mainprob:nesterov}, we have 
\[
\psi(z_{j})-\psi(z^*)\leq\frac{1}{2B_{j}}\|z^*-z_{0}\|^{2} \qquad \forall j\geq 1;
\]
\item[(c)] for every $j\geq 1$, we have
\begin{equation}
u_j\in  \partial_{\eta_{j}}(\psi_s+\psi_n)(z_j) ,\quad\|B_{j}u_{j}+z_{j}-z_{0}\|^{2}+2B_{j}\eta_{j}\le\|z_{j}-z_{0}\|^{2}.\label{ineq:NestHPE}
\end{equation}
\end{itemize}
\end{proposition}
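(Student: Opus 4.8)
\noindent The plan is to reconstruct the standard analysis of the accelerated composite gradient iteration (the case $\mu=0$ appears in \cite{tseng2008accmet}, the general case $\mu\ge0$ in \cite[Proposition~2.3]{YHe2}). First I would show by induction on $j$ that every $\Gamma_j$ is affine and satisfies $\Gamma_j\le\psi_s$: since $B_0=0$ one has $\Gamma_1=\ell_{\psi_s}(\cdot;\tilde z_0)\le\psi_s$ by convexity of $\psi_s$, and for $j\ge1$ the function $\Gamma_{j+1}$ is a convex combination of the affine minorant $\ell_{\psi_s}(\cdot;\tilde z_j)$ and $\Gamma_j$, so the property propagates; I write $g_{j+1}$ for the constant gradient of $\Gamma_{j+1}$. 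The heart of item (a) is the estimate
\[
\psi(z_j)\ \le\ \min_{z}\Big\{\Gamma_j(z)+\psi_n(z)+\tfrac{1}{2B_j}\|z-z_0\|^2\Big\}\qquad(j\ge1),
\]
which I would prove by the usual estimate-sequence induction. The base case $j=1$ reduces, using $z_0=y_0$, $\tilde z_0=y_0$ and $z_1=y_1$, to the descent inequality $\psi_s(y_1)\le\ell_{\psi_s}(y_1;y_0)+\frac{L}{2}\|y_1-y_0\|^2$. For the inductive step I would write the $(j+1)$-st potential $B_{j+1}[\Gamma_{j+1}+\psi_n]+\frac12\|\cdot-z_0\|^2$ as the $j$-th one plus $(B_{j+1}-B_j)[\ell_{\psi_s}(\cdot;\tilde z_j)+\psi_n]$, lower-bound the added term via $\mu$-strong convexity of $\psi_n$ and the optimality of $y_{j+1}$, use the three-point identity $z_{j+1}-\tilde z_j=\frac{B_{j+1}-B_j}{B_{j+1}}(y_{j+1}-y_j)$ together with the descent inequality at $\tilde z_j$, and verify that the cross terms cancel precisely because $B_{j+1}$ solves $L(B_{j+1}-B_j)^2=(\mu B_j+1)B_{j+1}$, which is exactly the value returned by the update formula for $B_{j+1}$.

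Next I would derive the growth bound \eqref{ineq:increasingA_k} from the telescoping estimate
\[
\sqrt{B_{j+1}}-\sqrt{B_j}=\frac{B_{j+1}-B_j}{\sqrt{B_{j+1}}+\sqrt{B_j}}=\frac{\sqrt{(\mu B_j+1)B_{j+1}/L}}{\sqrt{B_{j+1}}+\sqrt{B_j}}\ \ge\ \frac12\sqrt{\frac{\mu B_j+1}{L}}\,,
\]
bounded below by $\frac{1}{2\sqrt L}$ on the one hand (which, with $B_1=1/L$, yields the term $j^2/(4L)$) and by $\frac12\sqrt{\mu/L}\,\sqrt{B_j}$ on the other (which yields the term $\frac1L(1+\sqrt{\mu/(4L)})^{2(j-1)}$). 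Item (b) is then immediate: evaluating the minimum in (a) at $z=z^*$ and using $\Gamma_j(z^*)+\psi_n(z^*)\le\psi_s(z^*)+\psi_n(z^*)=\psi(z^*)$ gives the claimed bound.

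For item (c), the optimality condition for $y_{j+1}$ in its defining subproblem reads $0\in g_{j+1}+\partial\psi_n(y_{j+1})+\frac{1}{B_{j+1}}(y_{j+1}-y_0)$, i.e. $u_{j+1}-g_{j+1}\in\partial\psi_n(y_{j+1})$ since $u_{j+1}=(y_0-y_{j+1})/B_{j+1}$. Adding the subgradient inequality for $\psi_n$ at $y_{j+1}$ to the affine identity $\psi_s(z)\ge\Gamma_{j+1}(z)=\Gamma_{j+1}(y_{j+1})+\langle g_{j+1},z-y_{j+1}\rangle$ gives $\psi(z)\ge\Gamma_{j+1}(y_{j+1})+\psi_n(y_{j+1})+\langle u_{j+1},z-y_{j+1}\rangle$ for all $z$; substituting the definition of $\eta_{j+1}$ rewrites the right side as $\psi(z_{j+1})+\langle u_{j+1},z-z_{j+1}\rangle-\eta_{j+1}$, which is the inclusion $u_{j+1}\in\partial_{\eta_{j+1}}(\psi_s+\psi_n)(z_{j+1})$ (taking $z=z_{j+1}$ forces $\eta_{j+1}\ge0$). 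For the quadratic inequality I would use $z_0=y_0$, so that $B_{j+1}u_{j+1}+z_{j+1}-z_0=z_{j+1}-y_{j+1}$; expanding $\|z_{j+1}-z_0\|^2-\|z_{j+1}-y_{j+1}\|^2=2\langle z_{j+1}-y_{j+1},y_{j+1}-z_0\rangle+\|y_{j+1}-z_0\|^2$ with $y_{j+1}-z_0=-B_{j+1}u_{j+1}$ and inserting the definition of $\eta_{j+1}$, the inequality is seen to be equivalent to $\psi(z_{j+1})\le\Gamma_{j+1}(y_{j+1})+\psi_n(y_{j+1})+\frac{1}{2B_{j+1}}\|y_{j+1}-z_0\|^2$, which is precisely the estimate of (a) evaluated at its minimizer $y_{j+1}$.

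The step I expect to be the main obstacle is the inductive step for the descent estimate in (a): carrying the $\mu$-strong-convexity slack of $\psi_n$ through and matching it against the quadratic produced by the descent lemma so that all cross terms cancel — this is exactly where the specific quadratic defining $B_{j+1}$ is used. Everything else is either a short computation or a direct consequence of (a).
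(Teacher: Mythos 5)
The paper does not actually prove this proposition; it simply cites Proposition~2.3 of \cite{YHe2} for parts (a)--(b) and Proposition~8 of \cite{kong2018complexity} for part (c). Your reconstruction follows the same standard estimate-sequence argument that those references use, and it checks out: the update formula for $B_{j+1}$ does encode $L(B_{j+1}-B_j)^2=(\mu B_j+1)B_{j+1}$, the telescoping bound $\sqrt{B_{j+1}}-\sqrt{B_j}\ge \tfrac12\sqrt{(\mu B_j+1)/L}$ together with $B_1=1/L$ yields \eqref{ineq:increasingA_k}, part (b) is the minimum in (a) evaluated at $z^*$ using $\Gamma_j\le\psi_s$, and your derivation of (c) from the optimality condition $u_{j+1}-g_{j+1}\in\partial\psi_n(y_{j+1})$ plus the observation that the quadratic inequality in \eqref{ineq:NestHPE} is equivalent to the estimate of (a) evaluated at its minimizer $y_{j+1}$ is exactly the argument in the second reference. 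The one piece you leave as a sketch is the inductive step of (a), but the ingredients you identify --- the $(1+\mu B_j)$-strong convexity of the potential $B_j[\Gamma_j+\psi_n]+\tfrac12\|\cdot-z_0\|^2$ at its minimizer $y_j$, Jensen on the convex combination defining $z_{j+1}$, the descent lemma at $\tilde z_j$, and the defining quadratic for $B_{j+1}$ converting $\tfrac{1+\mu B_j}{2}\|y_{j+1}-y_j\|^2$ into $\tfrac{B_{j+1}L}{2}\|z_{j+1}-\tilde z_j\|^2$ --- are precisely what closes it.
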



\begin{proof} 
The proofs of (a) and (b) can be found in Proposition 2.3 of  \cite{YHe2} while the proof of (c) can be found in Proposition 8 of \cite{kong2018complexity}.
\end{proof}

The main role of the accelerated gradient variant of this subsection is to find an approximate solution $y_{k+1}$ of
the subproblem \eqref{subdiff} together with a certificate pair $(\tv_{k+1},\tilde \varepsilon_{k+1})$ satisfying
\eqref{subdiff} and \eqref{ineq:frame}. Indeed, 
we can apply the ACG method with $z_0=\tx_k$ to obtain the triple $(y_{k+1},\tv_{k+1},\tilde \varepsilon_{k+1})$ satisfying \eqref{subdiff} and \eqref{ineq:frame}.

 The following result essentially analyzes the iteration-complexity to compute
the aforementioned triple.

\begin{proposition}\label{cor:inner_complexity}
Let positive constants $ \alpha $, $ \delta $ and $ \kappa $ be given and consider the sequence
 $\{(B_j,\Gamma_j,z_j,u_j,\eta_j)\}$   generated by the ACG method applied to \eqref{mainprob:nesterov}
where $(\psi_s,\psi_n)$ is a given pair of data functions satisfying (B1) and (B2) with $\mu \geq 0$.
If $j$ is an index satisfying
\begin{equation}\label{Ajlower}
	j \ge 2\sqrt{\frac{L(\kappa+1)}{\kappa\alpha+(\kappa+1)\delta}}  \, ,
\end{equation}
 then
\[
\frac{1}{\alpha+\delta} \| u_j + \delta (z_j-z_0) \|^2 +2\eta_j\le (\kappa\alpha+\delta) \|z_j-z_0\|^2.
\]
\end{proposition}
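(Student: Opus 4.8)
The plan is to reduce the claimed inequality to the negative semidefiniteness of a $2\times 2$ quadratic form in the two vectors $u_j$ and $z_j-z_0$, and then to check that this semidefiniteness is exactly what the hypothesis \eqref{Ajlower} provides.

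First I would extract the two properties of the ACG iterates that are needed. By Proposition \ref{prop1:nesterov}(a) we have $B_j\ge j^2/(4L)$, so squaring \eqref{Ajlower} and dividing by $4L$ gives
\[
B_j\ \ge\ \frac{\kappa+1}{\kappa\alpha+(\kappa+1)\delta}.
\]
By Proposition \ref{prop1:nesterov}(c), $\|B_ju_j+z_j-z_0\|^2+2B_j\eta_j\le\|z_j-z_0\|^2$; expanding the square and dividing the resulting inequality by $B_j>0$ yields $2\eta_j\le -B_j\|u_j\|^2-2\langle u_j,z_j-z_0\rangle$.

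Next, writing $r:=z_j-z_0$ and expanding $\|u_j+\delta r\|^2=\|u_j\|^2+2\delta\langle u_j,r\rangle+\delta^2\|r\|^2$, I would substitute the bound on $2\eta_j$ into the left-hand side of the target inequality and collect terms, obtaining
\[
\frac{1}{\alpha+\delta}\|u_j+\delta r\|^2+2\eta_j-(\kappa\alpha+\delta)\|r\|^2\ \le\ a\|u_j\|^2+2b\langle u_j,r\rangle+c\|r\|^2,
\]
where $a=\tfrac{1}{\alpha+\delta}-B_j$, $b=-\tfrac{\alpha}{\alpha+\delta}$ and $c=-\tfrac{\alpha(\kappa\alpha+\kappa\delta+\delta)}{\alpha+\delta}$. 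Thus it suffices to show that the quadratic form on the right is $\le 0$, and for that it is enough that it be negative semidefinite, i.e.\ $a\le 0$, $c\le 0$ and $ac\ge b^2$.

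Finally I would verify these three conditions. Both $a\le0$ and $c\le0$ follow from $\alpha,\delta,\kappa>0$ together with the bound on $B_j$ above, using also $\tfrac{\kappa+1}{\kappa\alpha+(\kappa+1)\delta}\ge\tfrac{1}{\alpha+\delta}$ (which is just $\alpha\ge0$). The determinant condition $ac\ge b^2$, after clearing the positive factor $\alpha/(\alpha+\delta)^2$, becomes $(B_j(\alpha+\delta)-1)(\kappa\alpha+\kappa\delta+\delta)\ge\alpha$; since $\kappa\alpha+\kappa\delta+\delta+\alpha=(\kappa+1)(\alpha+\delta)$, this is equivalent to $B_j\ge\tfrac{\kappa+1}{\kappa\alpha+(\kappa+1)\delta}$, which is exactly the bound already established from \eqref{Ajlower}. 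The only real work here is the bookkeeping with the coefficients $a,b,c$ and noticing that the determinant inequality is equivalent to (not merely implied by) the hypothesis on $j$; there is no conceptual obstacle once the statement is cast as negative semidefiniteness of a $2\times2$ form.
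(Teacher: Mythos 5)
Your proof is correct, and it rests on exactly the same two ingredients as the paper's: the lower bound $B_j\ge j^2/(4L)\ge(\kappa+1)/[\kappa\alpha+(\kappa+1)\delta]$ from Proposition \ref{prop1:nesterov}(a) and the inequality $\|B_ju_j+z_j-z_0\|^2+2B_j\eta_j\le\|z_j-z_0\|^2$ from Proposition \ref{prop1:nesterov}(c). Where you differ is in how the algebra is packaged. The paper introduces the auxiliary constant $\tau=\kappa\alpha+(\kappa+1)\delta$, bounds the cross term via Young's inequality $2\langle u_j,z_0-z_j\rangle\le\tau^{-1}\|u_j\|^2+\tau\|z_j-z_0\|^2$, deduces the intermediate estimate $\alpha^{-1}(1-\delta B_j)\|u_j\|^2+2\eta_j\le\tau\|z_j-z_0\|^2$ from the coefficient comparison $B_j-\tau^{-1}\ge(1-\delta B_j)/\alpha$, and then combines this with a separate expansion of $\|u_j+\delta(z_j-z_0)\|^2$; the identity $\alpha\tau+\delta^2=(\kappa\alpha+\delta)(\alpha+\delta)$ closes the argument. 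You instead substitute the bound on $2\eta_j$ once, collect everything into a single quadratic form $a\|u_j\|^2+2b\langle u_j,r\rangle+c\|r\|^2$ with $r=z_j-z_0$, and verify negative semidefiniteness ($a\le0$, $c\le0$, $ac\ge b^2$); your coefficients $a$, $b$, $c$ and the reduction of the determinant condition to $B_j\ge(\kappa+1)/[\kappa\alpha+(\kappa+1)\delta]$ all check out. The two routes are mathematically equivalent (Young's inequality with the optimally chosen parameter $\tau$ is the discriminant condition in disguise), but yours has the advantage of making visible that the hypothesis \eqref{Ajlower} is used at exactly its threshold value, i.e., the determinant condition is \emph{equivalent} to the lower bound on $B_j$, whereas the paper's chain of intermediate inequalities obscures this. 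The paper's presentation, on the other hand, produces the intermediate relations \eqref{eq:sis}--\eqref{ineq:decouple} that it reuses verbatim in the displayed final computation; neither version is longer than the other in any essential way.
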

\begin{proof}
Let an index $j$  satisfying \eqref{Ajlower} be given and define $\tau:=\kappa\alpha+(\kappa+1)\delta$.
Noting that  \eqref{ineq:increasingA_k} implies that $B_j\geq \left( \kappa+1 \right) /\left[\kappa\alpha+(\kappa+1)\delta\right] $, and using 
the definition of $\tau$, we easily see after some simple algebraic manipulation that
\beq \label{eq:neww}
B_j - \frac{1}{\tau} \ge \frac{1-\delta B_j}\alpha.
\eeq
Using \eqref{ineq:NestHPE}
and the Cauchy-Schwartz inequality, we easily see that
	\begin{equation} \label{eq:sis}
	B_j\|u_j\|^2+2\eta_j \le 2\inner{u_j}{z_0-z_j} \le \frac{1}{\tau}\|u_j\|^2+\tau\|z_j-z_0\|^2
	\end{equation}
	which, together with \eqref{eq:neww}, 
then imply  that
	\begin{equation}\label{ineq:claim}
	\frac{1-\delta B_j}\alpha \|u_j\|^2+2\eta_j \le \tau \|z_j-z_0\|^2.
	\end{equation}
       Also, the first inequality in \eqref{eq:sis} implies that
	\begin{equation}\label{ineq:decouple}
	\begin{aligned}
	\| u_j + \delta (z_j-z_0) \|^2 
	&\le (1-\delta B_j)\|u_j\|^2 -2\delta\eta_j + \delta^2\|z_j-z_0\|^2.
	\end{aligned}
	\end{equation} 
	Now, \eqref{ineq:claim} and \eqref{ineq:decouple} yield
		\[
	\begin{aligned}
	\frac{1}{\alpha+\delta} \| u_j &+ \delta (z_j-z_0) \|^2 +2\eta_j \\
	&\le\frac{1-\delta B_j}{\alpha+\delta}\|u_j\|^2 + \frac{2\alpha}{\alpha+\delta}\eta_j + \frac{\delta^2}{\alpha+\delta}\|z_j-z_0\|^2\\
	&= \frac{\alpha}{\alpha+\delta}\left( \frac{1-\delta B_j}\alpha \|u_j\|^2+2\eta_j \right) + \frac{\delta^2}{\alpha+\delta}\|z_j-z_0\|^2\\
	&\le \left( \frac{\alpha\tau }{\alpha+\delta} + \frac{\delta^2}{\alpha+\delta} \right) \|z_j-z_0\|^2
	= (\kappa\alpha+\delta) \|z_j-z_0\|^2,
	\end{aligned}
	\]
	and hence that the first conclusion of the proposition holds. The second conclusion of the proposition follows immediately from the first one
	together with relations \eqref{ineq:increasingA_k} and \eqref{ineq:NestHPE}.
\end{proof}
\vspace{2mm} The next result, whose proof is given in Lemma 11 of \cite{kong2018complexity}, contains  some useful relations between two distinct
iterates of the sequence $\{(B_j,z_j,u_j,\eta_j)\}$ generated by the ACG algorithm when $\mu>0$, or equivalently,
$\psi_n$ is strongly convex (see condition (B1)).


\begin{proposition}\label{prop_Nestxjxi}Assume that $\mu>0$ in condition (B1) and let $\{(B_j,z_j,u_j,\eta_j)\}$ be the sequence generated by the ACG algorithm. 
If $i$ is an iteration index such that $B_i\geq \max\{8,9/\mu\}$,  then,  for every $j\geq i$, we have
	\begin{equation}\label{eq:lemNestaaa}
	\|z_j-z_0\| \leq  2\|z_i-z_0\|,\quad \|u_j\|\leq \frac{4}{B_j}\|z_i-z_0\|, \quad
	\eta_j\leq \frac{2}{B_j}\|z_i-z_0\|^2.
	\end{equation}
\end{proposition}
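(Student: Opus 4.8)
The plan is to exploit strong convexity of the full objective. Since $\psi_n$ is $\mu$-strongly convex and $\psi_s$ is convex, $\psi=\psi_s+\psi_n$ is $\mu$-strongly convex, hence has a unique minimizer $z^*$, and $0\in\partial\psi(z^*)$ yields $\psi(z)-\psi(z^*)\ge (\mu/2)\|z-z^*\|^2$ for all $z$. Combining this with the primal-convergence estimate of Proposition~\ref{prop1:nesterov}(b), namely $\psi(z_j)-\psi(z^*)\le \|z^*-z_0\|^2/(2B_j)$, gives
\[
\|z_j-z^*\|\le \frac{1}{\sqrt{\mu B_j}}\,\|z^*-z_0\|\qquad\forall\, j\ge 1 .
\]
Because the recursion in step~1 of the ACG method strictly increases $B_j$ at every iteration (equivalently, by \eqref{ineq:increasingA_k}), we have $B_j\ge B_i\ge 9/\mu$ for every $j\ge i$, so the factor $(\mu B_j)^{-1/2}$ in the display is at most $1/3$.

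Next I would turn this into the first estimate by two applications of the triangle inequality. Evaluating the display at index $i$ gives $\|z^*-z_0\|\le \|z_i-z_0\|+\|z_i-z^*\|\le \|z_i-z_0\|+\tfrac13\|z^*-z_0\|$, hence $\|z^*-z_0\|\le \tfrac32\|z_i-z_0\|$. Evaluating it at index $j\ge i$ gives $\|z_j-z_0\|\le \|z_j-z^*\|+\|z^*-z_0\|\le \tfrac43\|z^*-z_0\|\le 2\|z_i-z_0\|$, which is the first inequality in \eqref{eq:lemNestaaa}.

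The bounds on $\|u_j\|$ and $\eta_j$ then follow mechanically from the HPE-type inequality \eqref{ineq:NestHPE}. From $\|B_ju_j+z_j-z_0\|^2+2B_j\eta_j\le \|z_j-z_0\|^2$ one gets $\|B_ju_j\|\le \|B_ju_j+z_j-z_0\|+\|z_j-z_0\|\le 2\|z_j-z_0\|$ and $2B_j\eta_j\le \|z_j-z_0\|^2$; substituting the bound $\|z_j-z_0\|\le 2\|z_i-z_0\|$ just established yields $\|u_j\|\le (4/B_j)\|z_i-z_0\|$ and $\eta_j\le (2/B_j)\|z_i-z_0\|^2$.

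I do not expect a serious obstacle here: the argument is essentially two triangle inequalities bootstrapped off a strong-convexity contraction. The only points requiring a word of care are checking that $\{B_j\}$ is nondecreasing, that the contraction factor genuinely comes from the hypothesis $\mu B_i\ge 9$ (the extra requirement $B_i\ge 8$ is harmless and is presumably kept only for convenience when the proposition is invoked inside the D-AIPP analysis), and that $i\ge 1$ so that Proposition~\ref{prop1:nesterov}(b)--(c) may be applied — which holds since $B_0=0<B_i$.
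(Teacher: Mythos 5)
Your proof is correct and takes essentially the same route as the argument the paper defers to (Lemma 11 of the cited reference \cite{kong2018complexity}): $\mu$-strong convexity of $\psi$ combined with the primal bound of Proposition \ref{prop1:nesterov}(b) yields the contraction $\|z_j-z^*\|\le (\mu B_j)^{-1/2}\|z^*-z_0\|\le \tfrac13\|z^*-z_0\|$ for $j\ge i$, and the three estimates then follow from triangle inequalities and \eqref{ineq:NestHPE} exactly as you describe. Your side observations — that $\{B_j\}$ is strictly increasing, that $i\ge 1$ since $B_0=0$, and that the hypothesis $B_i\ge 8$ is not actually needed for this argument — are all accurate.
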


\subsection{Doubly ACG method for solving  the CNO problem}\label{sec:D-AIPP}

This subsection describes and analyzes the D-AIPP method to compute approximate solutions of the CNO problem \eqref{eq:ProbIntro}. The main results of this subsection are Theorem~\ref{th:AIPPcomplexity} and Corollary~\ref{cor:AIPPref2}
which analyze the iteration-complexity of the D-AIPP method to obtain  approximate solutions of the CNO problem in the sense 
of  \eqref{eq:ref4'} and \eqref{eq:approxOptimCond_Intro},  respectively.

We  start by stating  the D-AIPP method.

%
\noindent\rule[0.5ex]{1\columnwidth}{1pt}

D-AIPP

\noindent\rule[0.5ex]{1\columnwidth}{1pt}
\begin{itemize}
	\item [0.] Let $ x_0=y_0\in \dom h $,  a pair $(m,M) \in \R^2_{++}$ satisfying (A2),
a stepsize $ 0 <  \lambda \le 1/(2m)$, a tolerance pair
$(\bar \rho,\bar \varepsilon) \in \R^2_{++}$, and parameters $0<\theta < (1-\lam m)/2$ and $ \delta\ge 0$ be given, and
set $ k=0 $, $A_0=0$ and $ \xi=1-\lam m $.
	\item [1.] compute
		\[
		a_k =\frac{1+\sqrt{1+4 A_k}}{2}, \quad 
		A_{k+1}=A_k+a_k, \quad 
		\tilde x_k= \frac{A_k}{A_{k+1}}y_k+\frac{a_k}{A_{k+1}}x_k;
		\]
		and perform at least $ \left\lceil6\sqrt{2\lam M +1} \right\rceil  $ iterations of the ACG method started from $\tx_k $ and with
		\begin{equation}\label{function}
		\psi_s = \psi_s^k := \lam f+\frac{1}{4}\|\cdot-\tilde x_k\|^2,\quad \psi_n = \psi_n^k := \lam h + \frac{1}4 \|\cdot-\tilde x_k\|^2
		\end{equation}
		to obtain a triple $(z,u,\eta)$ satisfying
        \begin{align}\label{incl, ineq}
		& u \in \partial_{\eta} \left( \lam \phi (\cdot) + \frac12 \|\cdot-\tilde x_k\|^2 \right) (z), \\
& \frac{1}{\xi/2+\delta}
		\| u + \delta (z-\tilde x_k) \|^2 +2\eta \le (\xi/4 + \delta) \|z-\tilde x_k\|^2;\label{ineq:stop-inner}
		\end{align}
	\item[2.] if
		\begin{equation}\label{ineq:stop}
		\|z-\tilde x_k\|\le \frac{\lam \bar \rho}{2}  
		\end{equation}
		then go to step 3; otherwise, set $(y_{k+1},\tv_{k+1},\tilde \varepsilon_{k+1})=(z,u,2\eta)$,
		\begin{equation}\label{eq:x+}
 	 	x_{k+1}:=\frac{-\tv_{k+1}+\xi y_{k+1}/2 + \delta x_k/a_k - (1-1/a_k)\theta y_k}{\xi/2-\theta+(\theta+\delta)/a_k};
		\end{equation}
		and $ k $ \ensuremath{\leftarrow} $ k+1 $, and go to step 1;
\item[3.] restart the previous call to the ACG method in step 1 to find an iterate
$(\tz,\tilde u,\tilde \eta)$ satisfying \eqref{incl, ineq}, \eqref{ineq:stop-inner} with $(z,u,\eta)$ replaced by $(\tz,\tilde u,\tilde \eta)$ and the extra condition
\begin{equation} \label{eq:newcri1}
\tilde \eta  \le \lam \bar\varepsilon
\end{equation}
and set $(y_{k+1},\tilde v_{k+1},\tilde \varepsilon_{k+1})=(\tz,\tilde u,2\tilde \eta)$; finally,
output $(\lambda,y^-,y,v,\varepsilon)$ where $$(y^-,y,v,\varepsilon)=(\tx_k, y_{k+1}, \tv_{k+1}/\lambda, \tilde \varepsilon_{k+1}/(2\lambda)).$$
\end{itemize}
\rule[0.5ex]{1\columnwidth}{1pt}

We now make a few remarks about the D-AIPP method. First, 
the function
$\lam \phi + \|\cdot-\tx_k\|^2/2)$ which appears in \eqref{incl, ineq} clearly has lower curvature equal to $\xi$.
Since  the assumption on $\lambda$  implies that $\xi \ge 1/2$ , the latter function is actually  $\xi$-strongly convex.
In view of the last statement of Proposition~\ref{cor:inner_complexity} with $(\psi_s,\psi_n)$ given by \eqref{function}, the ACG method obtains a triple
$(z,u,\eta)$ satisfying  \eqref{incl, ineq}, \eqref{ineq:stop-inner} or, in the case of the last loop, a triple
$(\tz,\tu,\tilde\eta)$ satisfying \eqref{incl, ineq}, \eqref{ineq:stop-inner} and \eqref{eq:newcri1}.
Second,
the accelerated gradient iterations performed in steps 1 and 3 are referred to as the inner iterations of the D-AIPP method
and  the consecutive loops consisting of steps 0, 1 and 2 (or, steps 0, 1, 2 and 3 in the last loop) are referred to as the outer iterations of the D-AIPP method.
Third, Proposition \ref{prop1:nesterov}(c) implies that every iterate generated by the ACG method
satisfies \eqref{incl, ineq} and hence only \eqref{ineq:stop-inner}
plays the role of a termination criterion to obtain a $(z,u,\eta)$  in step 1.
Finally, the last loop supplements steps 0, 1 and 2  with step 3 whose
goal is to obtain a triple $(\tz,\tilde u,\tilde \eta)$ with a possibly smaller $\tilde \eta$ while
 keeping the quantity $\|\tx_k -\tilde z \|$ on the same order of magnitude as $\|\tx_k -z \|$
(see \eqref{eq:lemNestaaa}).

Our main goal in the remaining part of this subsection is to derive the inner-iteration complexity of the D-AIPP method. This will be accomplished by showing that the D-AIPP method is
a special instance of the GAIPP framework of Subsection \ref{subsec:GAIPP}, as advertised. A useful result towards showing this fact is as follows.

\begin{lemma}\label{inclusion}
	Assume that $\psi \in \bConv{n}$ is a $\xi$-strongly convex function and let
	$(y,\eta) \in \R^n \times \R$ be such that $
	0\in \partial_\eta \psi(y)$.
	Then, 
\[
0 \in \partial_{2\eta}\left(  \psi - \frac{\xi} 4 \|\cdot-y\|^2 \right) (y).
\]
\end{lemma}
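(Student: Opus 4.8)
The plan is to unpack the $\eta$-subdifferential inclusion $0 \in \partial_\eta \psi(y)$ into its defining inequality, then use the $\xi$-strong convexity of $\psi$ to boost that inequality to one involving the quadratic term $\frac{\xi}{4}\|\cdot - y\|^2$, at the cost of doubling the $\eta$ slack. Concretely, $0 \in \partial_\eta \psi(y)$ means $\psi(u) \ge \psi(y) - \eta$ for all $u \in \R^n$, while $\xi$-strong convexity (applied with $z = u$, the other point being $y$, and $\alpha$ ranging in $[0,1]$, then rearranged in the standard way) gives the lower bound
\[
\psi(u) \ge \psi(y) + \langle g, u - y\rangle + \frac{\xi}{2}\|u - y\|^2 \quad \text{for every } g \in \partial \psi(y),
\]
but since we do not know $\partial\psi(y) \ne \emptyset$, the cleaner route is to average the two lower bounds on $\psi$.

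First I would record the one-point characterization of strong convexity: for a $\xi$-strongly convex $\psi$ and any $y, u$, and any $t \in [0,1]$,
\[
\psi(y + t(u-y)) \le (1-t)\psi(y) + t\psi(u) - \frac{t(1-t)\xi}{2}\|u-y\|^2.
\]
Combining this with $0 \in \partial_\eta \psi(y)$, which gives $\psi(y) \le \psi(y + t(u-y)) + \eta$, yields
\[
\psi(y) \le (1-t)\psi(y) + t\psi(u) - \frac{t(1-t)\xi}{2}\|u-y\|^2 + \eta,
\]
i.e. $\psi(y) \le \psi(u) - \frac{(1-t)\xi}{2}\|u-y\|^2 + \eta/t$. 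Optimizing the trade-off between the $(1-t)$ factor and the $1/t$ factor — taking $t = 1/2$ — gives $\psi(y) \le \psi(u) - \frac{\xi}{4}\|u-y\|^2 + 2\eta$, which rearranges to
\[
\left(\psi - \frac{\xi}{4}\|\cdot - y\|^2\right)(u) \ge \left(\psi - \frac{\xi}{4}\|\cdot - y\|^2\right)(y) - 2\eta \quad \forall u,
\]
since the quadratic term vanishes at $u = y$. By the definition of the $2\eta$-subdifferential in \eqref{eq:epsubdiff}, this is exactly $0 \in \partial_{2\eta}(\psi - \frac{\xi}{4}\|\cdot - y\|^2)(y)$, completing the proof.

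I do not expect any genuine obstacle here; the only mild subtlety is the choice $t = 1/2$, which is what forces the factor $\frac{\xi}{4}$ (rather than something closer to $\frac{\xi}{2}$) and the doubled slack $2\eta$ — any other $t$ would give a weaker statement in one of the two parameters, and $t = 1/2$ is the value for which both the curvature constant and the error constant come out at the advertised values. One should also note that the inequality used for strong convexity holds for all $u \in \R^n$ (with the convention that it is vacuous when $\psi(u) = +\infty$, since both sides are then $+\infty$ compatible), so no restriction to $\dom\psi$ is needed and the final inclusion is over all of $\R^n$ as claimed.
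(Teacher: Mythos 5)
Your proof is correct, and it takes a genuinely different route from the paper's. The paper argues through the global minimizer $\bar y$ of $\psi$: strong convexity gives the quadratic growth $\psi(u)\ge \psi(\bar y)+\tfrac{\xi}{2}\|u-\bar y\|^2$, the inclusion $0\in\partial_\eta\psi(y)$ gives both $\psi(\bar y)\ge\psi(y)-\eta$ and $\tfrac{\xi}{2}\|\bar y-y\|^2\le\eta$, and the elementary bound $\|y-\bar y\|^2+\|u-\bar y\|^2\ge\tfrac12\|u-y\|^2$ then converts the growth around $\bar y$ into growth around $y$ with error $\eta'=\eta+\tfrac{\xi}{2}\|\bar y-y\|^2\le 2\eta$. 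You instead bypass the minimizer entirely: you evaluate the $\eta$-subdifferential inequality at the convex combination $y+t(u-y)$, feed it into the defining inequality of $\xi$-strong convexity, and divide by $t$, with $t=1/2$ producing exactly the constants $\xi/4$ and $2\eta$. Your argument is shorter and avoids invoking existence of a minimizer (and the auxiliary estimate on $\|\bar y-y\|$), while making transparent the trade-off between the retained curvature $(1-t)\xi/2$ and the inflated slack $\eta/t$; the paper's argument, in exchange, yields the geometric byproduct that $y$ lies within distance $\sqrt{2\eta/\xi}$ of the minimizer, which is in the spirit of how the lemma is used. Both proofs handle $u\notin\dom\psi$ trivially, as you note, so there is no gap.
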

\begin{proof}
The assumption on $\psi$ implies that it has a unique global minimum $\bar y $
and that
\begin{equation}\label{ineq:u}
\psi(u)\ge \psi(\bar y)+\frac{\xi}{2}\|u-\bar y\|^2
\end{equation}
 for every $u \in \R^n$. Moreover, the inclusion $ 0\in \partial_\eta \psi(y) $ and the definition of $\varepsilon$-subdifferential in \eqref{eq:epsubdiff}
imply that $\psi(u) \ge \psi(y) - \eta$ for every $u \in \R^n$, and hence that
\begin{equation}\label{ineq:bar y}
\psi(\bar y) \ge \psi(y) - \eta.
\end{equation}
	Hence, we conclude that for every $u \in \R^n$,
	\begin{equation}\label{ineq:u y}
	\begin{aligned}
	\psi(u)&
\ge \psi(y)-\eta+\frac{\xi}{2}\|u-\bar y\|^2 = \psi(y)-\left (\eta+\frac{\xi}{2}\|\bar y-y\|^2 \right) +\frac{\xi}{2}(\|y-\bar y\|^2+\|u-\bar y\|^2)\\
	&\ge \psi(y)-\eta'+\frac{\xi}{4}\|u-y\|^2 
	\end{aligned}
	\end{equation}
	where $ \eta':=\eta+(\xi/2)\|\bar y-y\|^2$. Also, inequality \eqref{ineq:u} with $u=y$
and relation \eqref{ineq:bar y} imply that $(\xi/2) \|\bar y-y\|^2 \le \eta$ and hence that $ \eta' \le 2 \eta$. 
The conclusion now follows from inequality \eqref{ineq:u y}, the definition of $\varepsilon$-subdifferential in \eqref{eq:epsubdiff} and the latter conclusion.
\end{proof}

\begin{lemma}\label{iter-cmplx}
The following statements about the D-AIPP method hold: 
\begin{itemize}
\item[(a)]
it is a special implementation of the GAIPP with $\alpha=\xi/2$, and $ \kappa=1/2 $;
\item[(b)]
if the parameter $\delta \ge 0$ in step 0 satisfies
\begin{equation} \label{eq:delta-cond}
\delta = {\cal O} \left( \sqrt{\frac{\lam \bar \rho}{D}} + \sqrt{\frac{D}{\lam \bar \rho}} \right),
\end{equation}
 then its number of outer iterations is bounded by
\begin{equation}\label{bound:outer}
\mathcal{O}\left( \frac{D^2}{\lam^2\bar \rho^2}+1\right);
\end{equation}
\item[(c)] at every outer iteration, the call to the ACG method in step 2 finds a triple
$(z,u,\eta)$ satisfying \eqref{incl, ineq} and \eqref{ineq:stop-inner} in at most
\[
{\cal O}\left( \sqrt{\lam M + 1} \right)
\]
inner iterations;
\item[(d)] at the last outer  iteration, say the $K$-th one,  the triple $(\tz,\tu,\tilde \eta)$ satisfies
\[
\|\tilde x_K-\tz\| \le \lam \bar \rho, \quad \tilde \eta \le \lam \bar \varepsilon
\]
and the extra number of ACG iterations to obtain such triple is bounded by
\[
{\cal O}\left( 
\sqrt{\lam M+1}\log^+_1 \left(  \frac{\bar \rho\sqrt{\lam^2M+\lam}}{\sqrt{\bar \varepsilon}} \right) 
\right).
\]
\end{itemize}

\end{lemma}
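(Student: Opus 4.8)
\textbf{Proof proposal for Lemma~\ref{iter-cmplx}.}

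The plan is to verify each of the four items by identifying the D-AIPP method with a concrete instance of the GAIPP framework and then invoking the general results of Subsection~\ref{subsec:GAIPP} together with the ACG complexity estimates of Subsection~\ref{subsec:ACG}. For item (a), I would match notation directly: taking $\alpha=\xi/2$ and $\kappa=1/2$, the inclusion \eqref{incl, ineq} and the inner termination criterion \eqref{ineq:stop-inner} become, after setting $(y_{k+1},\tv_{k+1},\tilde\varepsilon_{k+1})=(z,u,2\eta)$, exactly the requirements \eqref{subdiff} and \eqref{ineq:frame} of step~2 of GAIPP; here one uses Lemma~\ref{inclusion} with $\psi=\lam\phi+\tfrac12\|\cdot-\tx_k\|^2$ (which is $\xi$-strongly convex since $\lam\le 1/(2m)$ forces $\xi\ge 1/2$) to pass from an $\eta$-subdifferential inclusion for $\psi$ to the $2\eta$-subdifferential inclusion for $\psi-\tfrac{\xi}4\|\cdot-y_{k+1}\|^2=\lam\phi+\tfrac12\|\cdot-\tx_k\|^2-\tfrac\alpha2\|\cdot-y_{k+1}\|^2$. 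One also checks that the update \eqref{eq:x+} is precisely \eqref{def:x+ FISTA} with $\alpha=\xi/2$, and that the constraint $0<\theta<(1-\lam m)/2=\xi/2=\alpha$ from step~0 of D-AIPP matches the requirement $0<\theta<\alpha\le 1$ of GAIPP (noting $\alpha=\xi/2\le 1/2\le 1$), while $\kappa=1/2<1$. Thus the sequences produced by D-AIPP satisfy all the hypotheses of Theorem~\ref{prop:main} and Corollary~\ref{coroll:bound}.

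For item (b), I would apply Corollary~\ref{coroll:bound} with the constant stepsize $\lam_k\equiv\lam$ (so $\underline\lam=\lam$), the tolerance $\rho=\bar\rho/2$ coming from the stopping test \eqref{ineq:stop}, and $\kappa\alpha=\xi/4\le 1/4$, which makes $(1-\kappa\alpha)^{-1}={\cal O}(1)$. The hypothesis \eqref{eq:delta-cond} is exactly the condition $\delta={\cal O}(\sqrt{\underline\lam\rho/D}+\sqrt{D/(\underline\lam\rho)})$ appearing in Corollary~\ref{coroll:bound} (up to the harmless factor of $2$ between $\rho$ and $\bar\rho$), so the corollary yields an outer index $k$ with $\|y_k-\tx_{k-1}\|/\lam\le\bar\rho/2$, i.e. $\|z-\tx_k\|\le\lam\bar\rho/2$, within ${\cal O}(1+D^2/(\lam^2\bar\rho^2))$ outer iterations; that index is the first outer iteration at which the test \eqref{ineq:stop} succeeds, giving \eqref{bound:outer}.

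For item (c), I would invoke Proposition~\ref{cor:inner_complexity} with $(\psi_s,\psi_n)$ as in \eqref{function}: the smooth part $\psi_s^k=\lam f+\tfrac14\|\cdot-\tx_k\|^2$ has an $(2\lam M+1)$-Lipschitz gradient on $\Omega$ (bounding $\lam M$ times the Lipschitz constant of $\nabla f$ plus $1/2$ from the quadratic, and crudely $2\lam M+1$ suffices), so with $L=2\lam M+1$, $\kappa=1/2$ and $\alpha=\xi/2$, condition \eqref{Ajlower} is met once $j\gtrsim\sqrt{L}={\cal O}(\sqrt{\lam M+1})$; since $\lceil 6\sqrt{2\lam M+1}\rceil$ iterations are always performed, the triple $(z,u,\eta)$ satisfying \eqref{incl, ineq}--\eqref{ineq:stop-inner} is produced within ${\cal O}(\sqrt{\lam M+1})$ inner iterations (here I should double-check that the numerical constant $6$ is large enough to absorb the constant hidden in \eqref{Ajlower} for the choices $\kappa=1/2$, $\alpha=\xi/2\ge 1/4$, $\delta\ge0$, which reduces to verifying $6^2(2\lam M+1)\ge 4L(\kappa+1)/(\kappa\alpha+(\kappa+1)\delta)$ — this holds since the right side is at most $4L\cdot\tfrac{3/2}{1/8}=48L$ when $\delta=0$, wait, that needs the slightly finer bound $B_j\ge j^2/(4L)$, so $j\ge 6\sqrt{L}$ gives $B_j\ge 9/L\cdot L/\ldots$; I would recompute this constant carefully, which is the one genuinely fiddly point).

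For item (d), I would use Proposition~\ref{prop_Nestxjxi}: at the last outer iteration $K$, step~3 restarts the same ACG call already run for the index $i$ of step~1 (for which $\|z-\tx_K\|\le\lam\bar\rho/2$ and $B_i\ge\max\{8,9/\mu\}$ can be arranged by the mandated $\lceil 6\sqrt{2\lam M+1}\rceil$ iterations, since $\mu=\Theta(\lam)$ up to constants via $\psi_n^k$ being $(1/2)$-strongly convex after scaling). Then \eqref{eq:lemNestaaa} gives $\|\tz-\tx_K\|\le 2\|z-\tx_K\|\le\lam\bar\rho$ and $\tilde\eta\le(2/B_j)\|z-\tx_K\|^2$ for all $j\ge i$, so \eqref{eq:newcri1}, i.e. $\tilde\eta\le\lam\bar\varepsilon$, is guaranteed once $B_j\gtrsim\lam\bar\rho^2/\bar\varepsilon$. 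Combining this with the geometric lower bound $B_j\ge L^{-1}(1+\sqrt{\mu/(4L)})^{2(j-1)}$ from \eqref{ineq:increasingA_k} and solving for $j$ yields the extra ACG iteration count ${\cal O}(\sqrt{\lam M+1}\,\log^+_1(\bar\rho\sqrt{\lam^2 M+\lam}/\sqrt{\bar\varepsilon}))$, using $\sqrt{L/\mu}=\Theta(\sqrt{\lam M+1})$ and $L/\mu=\Theta(\lam M+1)$ together with the fact that $\|z-\tx_K\|\le\lam\bar\rho/2$ makes the target threshold $\lam\bar\rho^2/\bar\varepsilon$ translate into the stated logarithmic argument; the inequalities \eqref{incl, ineq} and \eqref{ineq:stop-inner} for $(\tz,\tu,\tilde\eta)$ follow for free from Proposition~\ref{cor:inner_complexity} applied to any later ACG iterate.

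The main obstacle I anticipate is not conceptual but bookkeeping: pinning down the absolute constants in item~(c) — verifying that exactly $6\sqrt{2\lam M+1}$ iterations suffice to satisfy \eqref{Ajlower} for the specific parameter choices $\alpha=\xi/2$, $\kappa=1/2$, arbitrary $\delta\ge0$ — and, in item~(d), correctly propagating the hypothesis $B_i\ge\max\{8,9/\mu\}$ of Proposition~\ref{prop_Nestxjxi} through the scaling that turns $\psi_n^k$ into a $(1/2)$-strongly convex function and back, so that the final logarithmic argument comes out as stated rather than with an unintended extra factor.
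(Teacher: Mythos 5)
Your proposal follows the paper's proof essentially step for step: (a) via Lemma~\ref{inclusion} applied to the tilted function $\lam\phi+\tfrac12\|\cdot-\tx_k\|^2-\inner{u}{\cdot}$ together with the identification of \eqref{eq:x+} with \eqref{def:x+ FISTA}; (b) via Corollary~\ref{coroll:bound} with $\underline\lam=\lam$; (c) via Proposition~\ref{cor:inner_complexity}; and (d) via Proposition~\ref{prop_Nestxjxi} and the geometric lower bound on $B_j$ in \eqref{ineq:increasingA_k}. The one loose end you flag in (c) resolves itself once you use the exact Lipschitz constant $L=\lam M+\tfrac12$ of $\nabla\psi_s^k$ rather than the crude $2\lam M+1$: with $\kappa=1/2$, $\alpha=\xi/2$, $\xi\ge 1/2$ and $\delta\ge0$, the threshold \eqref{Ajlower} equals $2\sqrt{6L/(\xi+6\delta)}\le 2\sqrt{6}\,\sqrt{2\lam M+1}<6\sqrt{2\lam M+1}$, whereas with $L=2\lam M+1$ it would come out to about $6.93\sqrt{2\lam M+1}$ and exceed the mandated count (the ${\cal O}(\sqrt{\lam M+1})$ conclusion survives either way); similarly, in (d) note that $\mu=1/2$ exactly (not $\Theta(\lam)$ --- no rescaling of $\psi_n^k$ is involved), which is precisely what gives $B_i\ge i^2/(4L)\ge 18=\max\{8,9/\mu\}$ from $i\ge 6\sqrt{2\lam M+1}$.
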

\begin{proof}
	(a) Since $\xi=1-\lam m$ in view of step 0 of the D-AIPP, it follows from\eqref{eq:ProbIntro} and \eqref{ineq:lowercurv} that the function  $\psi= \lam \phi + \|\cdot-\tilde x_k\|^2 /2 - \inner{\tv_{k+1}}{\cdot}$
 is $\xi$-strongly convex. Now, using Lemma \ref{inclusion} to this function $\psi$, we easily see that 
\eqref{incl, ineq} and \eqref{ineq:stop-inner} imply that \eqref{subdiff} and \eqref{ineq:frame} hold with $\alpha=\xi/2$, and $ \kappa=1/2 $.
	Statement (a) now  follows by noting that \eqref{eq:x+} is equivalent to \eqref{def:x+ FISTA} with $\alpha=\xi/2$.
	
	(b) Using (a) and Corollary \ref{coroll:bound}, we can easily see that the number of outer iterations needed to satisfy the
termination criterion \eqref{ineq:stop} for the outer iteration is bounded by \eqref{bound:outer}.
	
	(c) First, note that the function $\psi_s$ defined in \eqref{function} satisfies condition (B2) of Subsection~\ref{subsec:ACG}
with  $ L=\lambda M+1/2$,  in view of  \eqref{ineq:lowercurv}.
Hence, it follows from the last conclusion of Proposition~\ref{cor:inner_complexity} that the ACG method  obtains a triple $(z,u,\eta)$ satisfying
\eqref{incl, ineq} and \eqref{ineq:stop-inner} in at most  
\[
\left\lceil 2\sqrt{\frac{6L}{\xi+6\delta}} \right\rceil = \left\lceil 2\sqrt{\frac{3(2\lam M + 1)}{\xi+6\delta}} \right\rceil 
\]
inner iterations.
Since the call to ACG method in step 1 is required to be at least $ \left\lceil6\sqrt{2\lam M+1}
\right\rceil  $ iterations,
we then conclude that the call to the ACG method in step 1 finds a triple
$(z,u,\eta)$ satisfying \eqref{incl, ineq} and \eqref{ineq:stop-inner}in at most
\begin{equation}\label{eq:inner}
	\left\lceil 2\sqrt{2\lam M+1}\max \left\lbrace 3, \sqrt{\frac{3}{\xi+6\delta}} \right\rbrace \right\rceil =\left\lceil 6\sqrt{2\lam M+1}\right\rceil = {\cal O}\left( \sqrt{\lam(M+m)} \right)
\end{equation}
where the first equality is due to the fact that $\xi =1 - \lam m$ and $\lam \ge 1/(2m)$ (see step 0 of the D-AIPP method).

%
	(d)
Consider the triple $(z,u,\eta)$ obtained in step 1 during the last outer iteration of 
the D-AIPP method. Clearly, $(z,u,\eta)$ satisfies \eqref{incl, ineq}, \eqref{ineq:stop-inner} and \eqref{ineq:stop}.
In view of step 1, there exists $i \ge \left\lceil 6\sqrt{2\lambda M+1}\right\rceil$ such that
$(z,u,\eta)$ is the $i$th-iterate of the ACG method started from $z_0=\tx_K$ applied to problem \eqref{mainprob:nesterov} with $\psi_s$ and $\psi_n$ as in \eqref{function}.
Noting that  the functions  $\psi_n$ and $\psi_s$ satisfy conditions (B1) and  (B2)
of Subsection~\ref{subsec:ACG} with  $\mu=1/2$ and $L= \lambda M+1/2$ (see \eqref{ineq:lowercurv}) and using
the above inequality on the index $i$ and relation \eqref{ineq:increasingA_k},
we conclude that $B_i\geq 18=\max\{8,9/\mu\}$, and hence that $i$ satisfies  the assumption of Proposition~\ref{prop_Nestxjxi}. 
It then follows from  \eqref{eq:lemNestaaa}, \eqref{ineq:stop-inner}, \eqref{ineq:stop}  and \eqref{ineq:increasingA_k} that the continuation of the ACG method mentioned in step~3 generates
a  triple  $(\tz,\tu,\tilde \eta)=(z_j,u_j,\eta_j)$ satisfying 
\begin{align*} 
&\|\tz-\tilde x_K\| \le 2 \|z-\tx_K\|\le \lam \bar \rho, \\
 & \tilde\eta\le \frac{2}{B_j}\|z-\tx_K\|^2\le \frac{2L\lam^2\bar\rho^2}{9\left( 1+\sqrt{\frac{\mu}{4L}}\right) ^{2(j-1)}}.
\end{align*}
The last inequality together with the fact that  $\mu=1/2$ and  $L= \lambda M+1/2$,  and $\log(1+t)\geq t/2$
for all $t \in [0,1]$,  can now be easily seen to imply the conclusion of (d).
\end{proof}

We now make some remarks about Lemma \ref{iter-cmplx}.
First, D-AIPP also converges when $\delta$ does not satisfy \eqref{eq:delta-cond} but the corresponding outer-iteration complexity bound will be worse
than the one in Lemma \ref{iter-cmplx} (b). Second, the inner iteration-complexity bound for D-AIPP is independent of $\delta$
due to the fact that the largest term in the maximand in \eqref{eq:inner} is the first one.
The latter term is due to the fact that step 1 of the D-AIPP prescribes that a number of inner iterations on the order of this term
be performed.
Third, we have implemented a more practical (although heuristic) variant of D-AIPP
in which this requirement is not fulfilled and have observed that
its inner iteration-complexity behaves more like the second term, i.e., as ${\cal O}(1/\sqrt {1+\delta})$.
In summary, although the whole notation, formulae and analysis presented so far would become much simpler by simply choosing $\delta=0$,
the above remarks show how important is to make $\delta$ large.
%

We next state the main result of this paper which derives the iteration-complexity of the D-AIPP method to obtain a
$(\bar \rho,\bar \varepsilon)$-prox-approximate solution
of  the CNO problem in the sense of~\eqref{eq:ref4'}. 

\begin{theorem}\label{th:AIPPcomplexity}
If the parameter $\delta \ge 0$ in step 0 satisfies \eqref{eq:delta-cond}, then
 the D-AIPP method terminates with a $(\bar \rho,\bar \varepsilon)$-prox-approximate solution  $(\lam,y^-,y,v,\varepsilon)$ 
 by performing a total number of inner iterations bounded by
\begin{equation}\label{eq:lastboundAIPP}
\mathcal{O}\left\{ \sqrt{\lam M+1}\left[  \frac{D^2}{\lam^2\bar \rho^2}
 + \log^+_1\left(\frac{ \bar\rho \sqrt{\lam^2M+\lam}}{\sqrt{\bar\varepsilon}}\right) \right]
\right\}.
\end{equation}
As a consequence, if $ \lam = \Theta\left( 1/m \right)  $, the above inner-iteration complexity reduces to
\[
\mathcal{O}\left( \frac{M^{1/2}m^{3/2}D^2}{\bar \rho^2} + \sqrt{\frac{M}{m}}\log ^+_1 \left( \frac{\bar \rho \sqrt{M}}{m\sqrt{\bar \varepsilon}}\right) \right).
\]
\end{theorem}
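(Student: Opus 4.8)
\emph{Proof plan.} The plan is to obtain Theorem~\ref{th:AIPPcomplexity} as an essentially immediate consequence of Lemma~\ref{iter-cmplx}, after a short amount of bookkeeping that converts the GAIPP-level iterates into the rescaled quintuple defining a $(\bar \rho,\bar \varepsilon)$-prox-approximate solution in the sense of \eqref{eq:ref4'}.

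First I would record that D-AIPP is well-posed and terminates. By Lemma~\ref{iter-cmplx}(a) it is an instance of GAIPP with $\alpha=\xi/2$ and $\kappa=1/2$; since $\lam\le 1/(2m)$ forces $\xi=1-\lam m\in[1/2,1)$, we get $\alpha\in[1/4,1/2)$ and hence $(1-\kappa\alpha)^{-1}={\cal O}(1)$, while the constant stepsize $\lam_k\equiv\lam$ supplies the lower bound $\underline{\lam}=\lam$ required by Corollary~\ref{coroll:bound}. Hypothesis \eqref{eq:delta-cond} is precisely the admissible range of $\delta$ in that corollary with tolerance $\rho=\bar\rho/2$ (note that \eqref{ineq:stop} reads $\|z-\tx_k\|/\lam\le\bar\rho/2$), so some outer index $K$ triggers \eqref{ineq:stop}, and by Lemma~\ref{iter-cmplx}(b) the number of outer iterations performed up to and including the $K$-th one is ${\cal O}(D^2/(\lam^2\bar\rho^2)+1)$. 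At that outer iteration step~3 restarts the ACG call, and Lemma~\ref{iter-cmplx}(d) delivers $(\tz,\tu,\tilde\eta)$ with $\|\tx_K-\tz\|\le\lam\bar\rho$ and $\tilde\eta\le\lam\bar\varepsilon$, so the method indeed halts.

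Next I would verify that the output $(\lam,y^-,y,v,\varepsilon)=(\lam,\tx_K,\tz,\tu/\lam,\tilde\eta/\lam)$ meets \eqref{eq:ref4'}. Dividing the inclusion in \eqref{incl, ineq} (written for $(\tz,\tu,\tilde\eta)$) by $\lam$ gives $\tu/\lam\in\partial_{\tilde\eta/\lam}\bigl(\phi+\tfrac1{2\lam}\|\cdot-\tx_K\|^2\bigr)(\tz)$, i.e.\ $v\in\partial_{\varepsilon}\bigl(\phi+\tfrac1{2\lam}\|\cdot-y^-\|^2\bigr)(y)$; the two estimates of Lemma~\ref{iter-cmplx}(d) then give $\|(y^--y)/\lam\|\le\bar\rho$ and $\varepsilon=\tilde\eta/\lam\le\bar\varepsilon$. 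Here one should keep track of the scaling prescribed by step~3, which sets $\tilde v_{K+1}=\tu$, $\tilde\varepsilon_{K+1}=2\tilde\eta$ and then $(v,\varepsilon)=(\tilde v_{K+1}/\lam,\tilde\varepsilon_{K+1}/(2\lam))$, so that $\varepsilon=\tilde\eta/\lam$ as used above.

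For the iteration count, the total number of inner (ACG) iterations is the sum, over the ${\cal O}(D^2/(\lam^2\bar\rho^2)+1)$ outer iterations, of the per-outer bound ${\cal O}(\sqrt{\lam M+1})$ from Lemma~\ref{iter-cmplx}(c), plus the extra ${\cal O}\bigl(\sqrt{\lam M+1}\,\log^+_1(\bar\rho\sqrt{\lam^2M+\lam}/\sqrt{\bar\varepsilon})\bigr)$ spent in step~3 according to Lemma~\ref{iter-cmplx}(d); since $\log^+_1(\cdot)\ge1$ absorbs the additive constant, adding these two contributions produces exactly \eqref{eq:lastboundAIPP}. The stated consequence then follows by substituting $\lam=\Theta(1/m)$ and using $M\ge m$, which gives $\sqrt{\lam M+1}=\Theta(\sqrt{M/m})$, $D^2/(\lam^2\bar\rho^2)=\Theta(m^2D^2/\bar\rho^2)$ and $\sqrt{\lam^2M+\lam}=\Theta(\sqrt M/m)$. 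I do not expect a genuine obstacle, since Lemma~\ref{iter-cmplx} already carries out all the analytic work; the one point needing a moment's care is the factor-of-two slack between the step~2 test $\|z-\tx_K\|\le\lam\bar\rho/2$ and the target $\|\tx_K-\tz\|\le\lam\bar\rho$ demanded of the output, which is bridged exactly by the inequality $\|\tz-\tx_K\|\le2\|z-\tx_K\|$ furnished (via Proposition~\ref{prop_Nestxjxi}) inside the proof of Lemma~\ref{iter-cmplx}(d).
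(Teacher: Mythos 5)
Your proposal is correct and follows essentially the same route as the paper: the theorem is obtained by combining parts (b)--(d) of Lemma~\ref{iter-cmplx} (outer count times per-outer inner bound, plus the step-3 tail) and checking via the step-3 rescaling and elementary $\varepsilon$-subdifferential properties that the output quintuple satisfies \eqref{eq:ref4'}. Your added bookkeeping (verifying $(1-\kappa\alpha)^{-1}={\cal O}(1)$, the $\varepsilon=\tilde\eta/\lam$ scaling, and the factor-of-two slack between \eqref{ineq:stop} and the output accuracy) is accurate and merely makes explicit what the paper leaves implicit.
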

\begin{proof}
It is easy to see that the definition of $(\lambda,y^-,y,v,\varepsilon)$ in step~3, the fact that $(\tz,\tilde u,\tilde \eta)$ satisfies \eqref{incl, ineq} and \eqref{ineq:stop-inner} with $(z,u,\eta)$
replaced by $(\tz,\tilde u,\tilde \eta)$, some elementary properties of the subdifferential,
and part (d) of the previous lemma, imply that the output  $(\lambda,y^-,y,v,\varepsilon)$  satisfies \eqref{eq:ref4'}, and hence is a $(\bar \rho, \bar \varepsilon)$-prox-approximate solution
of \eqref{eq:ProbIntro}. The bound in \eqref{eq:lastboundAIPP} follows from the bounds derived in statements (b)-(d) of Lemma~\ref{iter-cmplx}.
\end{proof}

\vgap
The following result, which follows as an immediate consequence of Theorem \ref{th:AIPPcomplexity}, analyzes the overall number of inner iterations of the D-AIPP method to compute approximate solutions of the CNO problem in the sense of \eqref{eq:approxOptimCond_Intro}.

\begin{corollary}\label{cor:AIPPref2}

Assume that the parameter $\delta \ge 0$ in step 0 satisfies \eqref{eq:delta-cond}.
Also, let a tolerance $\hat\rho>0$ be given and let $(\lambda,y^-,y,v,\varepsilon)$
be the output obtained by the  D-AIPP method with inputs  $\lambda=1/(2m)$ and $(\bar\rho,\bar\varepsilon)$ defined as
\begin{equation}\label{eq:cor_complex1}
\bar\rho:=\frac{\hat \rho}{4} \quad \mbox{and}\quad \bar\varepsilon:= \frac{\hat \rho^2}{32(M+2m)}.
\end{equation}
Then, the following statements hold:
\begin{itemize}
\item[(a)] the D-AIPP method  terminates in at most
\[
\mathcal{O}\left(\frac{M^{1/2}m^{3/2}D^2}{\hat \rho^2}
+\sqrt{\frac{M}{m}} \log^{+}_1\left(\frac{M}{m}\right)\right)
\]
inner iterations;
\item[(b)]
if  $\nabla g$ is $M$-Lipschitz continuous, then the pair  $(\hat z, \hat v)=(z_g,v_g)$ computed  according to  \eqref{eq:def_zg} and \eqref{eq:def_vg}  
is a $\hat{\rho}$-approximate solution of \eqref{eq:ProbIntro}, i.e., \eqref{eq:approxOptimCond_Intro} holds. 
\end{itemize}
\end{corollary}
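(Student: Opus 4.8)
The plan is to simply assemble the pieces already established. For part~(a), I would start from the general inner-iteration complexity bound \eqref{eq:lastboundAIPP} of Theorem~\ref{th:AIPPcomplexity}, which is legitimate to invoke because the hypothesis that $\delta$ satisfies \eqref{eq:delta-cond} is carried over verbatim. Then I substitute the specified inputs $\lam = 1/(2m)$ and $(\bar\rho,\bar\varepsilon)$ from \eqref{eq:cor_complex1}. The substitution $\lam = 1/(2m)$ turns $\sqrt{\lam M + 1}$ into $\sqrt{M/(2m) + 1} = \Theta(\sqrt{M/m})$ since $M \ge m$; it turns $D^2/(\lam^2\bar\rho^2)$ into $4m^2 D^2/\bar\rho^2 = \Theta(m^2 D^2/\hat\rho^2)$ using $\bar\rho = \hat\rho/4$. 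Multiplying these gives the first term $\Theta(M^{1/2}m^{3/2}D^2/\hat\rho^2)$. For the logarithmic term, I compute $\lam^2 M + \lam = M/(4m^2) + 1/(2m) = \Theta(M/m^2)$ (again using $M\ge m$), so that $\bar\rho\sqrt{\lam^2 M+\lam}/\sqrt{\bar\varepsilon} = \Theta\big( \hat\rho \cdot \sqrt{M}/m \cdot \sqrt{M+2m}/\hat\rho \big) = \Theta(\sqrt{M(M+2m)}/m) = \Theta(M/m)$, using once more $m \le M$ so that $M + 2m = \Theta(M)$. Hence $\log^+_1$ of that quantity is $\Theta(\log^+_1(M/m))$, and multiplying by $\sqrt{M/m}$ gives the second term. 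Combining yields the stated bound in~(a).

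For part~(b), I would invoke Proposition~\ref{prop:refapproxsol}. The D-AIPP output $(\lambda, y^-, y, v, \varepsilon)$ is, by Theorem~\ref{th:AIPPcomplexity}, a $(\bar\rho,\bar\varepsilon)$-prox-approximate solution of \eqref{eq:ProbIntro}, i.e.\ it satisfies \eqref{eq:ref4'}. Applying Proposition~\ref{prop:refapproxsol} to this quintuple with $(z^-,z,w,\varepsilon) = (y^-,y,v,\varepsilon)$, the pair $(z_f,v_f)$ defined through \eqref{eq:def_zg}--\eqref{eq:def_vg} satisfies $v_f \in \nabla f(z_f) + \partial h(z_f)$ and
\[
\|v_f\| \le 2\big[\bar\rho + \sqrt{2\bar\varepsilon(M + \lam^{-1})}\,\big].
\]
With $\lam = 1/(2m)$ we have $M + \lam^{-1} = M + 2m$, so $2\bar\varepsilon(M+2m) = \hat\rho^2/16$ by the choice of $\bar\varepsilon$ in \eqref{eq:cor_complex1}, giving $\sqrt{2\bar\varepsilon(M+2m)} = \hat\rho/4$. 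Together with $\bar\rho = \hat\rho/4$, this yields $\|v_f\| \le 2(\hat\rho/4 + \hat\rho/4) = \hat\rho$. Thus $(\hat z,\hat v) = (z_f,v_f)$ satisfies \eqref{eq:approxOptimCond_Intro}, which is precisely the assertion of~(b).

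The only genuinely delicate point — and it is minor — is bookkeeping the $\Theta$-estimates under the standing assumption $0 < m \le M$, so that simplifications like $M + 2m = \Theta(M)$ and $\sqrt{M/(2m)+1} = \Theta(\sqrt{M/m})$ are justified; none of this is an obstacle, merely routine order-of-magnitude arithmetic. I would also note in passing that the statement of~(b) refers to "$\nabla g$" and to $(z_g,v_g)$, which appears to be a typo for $\nabla f$ and $(z_f,v_f)$ as in Proposition~\ref{prop:refapproxsol} and formulas \eqref{eq:def_zg}--\eqref{eq:def_vg}; the hypothesis that $\nabla f$ is $M$-Lipschitz is already part of (A2), so this condition is automatically in force. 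Hence the proof is a direct concatenation of Theorem~\ref{th:AIPPcomplexity}, Proposition~\ref{prop:refapproxsol}, and the arithmetic of the prescribed parameter choices.
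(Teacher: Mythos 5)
Your proposal is correct and follows essentially the same route as the paper: part (a) is the substitution of $\lambda=1/(2m)$ and \eqref{eq:cor_complex1} into the bound of Theorem~\ref{th:AIPPcomplexity} using $m\le M$, and part (b) applies Proposition~\ref{prop:refapproxsol} to the output quintuple together with the identity $\hat\rho=2\bigl[\bar\rho+\sqrt{2\bar\varepsilon(M+2m)}\bigr]$. Your observation that ``$\nabla g$'' and $(z_g,v_g)$ are typos for $\nabla f$ and $(z_f,v_f)$ is also accurate.
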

\begin{proof}
(a) This statement follows immediately from  Theorem~\ref{th:AIPPcomplexity}, \eqref{eq:cor_complex1} and the fact that $m\leq M$ due to (A2).

 (b) First note that Theorem~\ref{th:AIPPcomplexity} implies that the D-AIPP output $(\lambda,z^-,z,v,\varepsilon)$ satisfies criterion \eqref{eq:ref4'} with  $\bar\rho$ and $\bar\varepsilon$  as  in \eqref{eq:cor_complex1}.  Moreover,  \eqref{eq:cor_complex1} also implies that 
$$
\hat\rho= 2 \left[ \bar\rho+ \sqrt{2\bar\varepsilon \left(M+2m\right)} \right].
$$
Hence,  the result   follows from Proposition~\ref{prop:refapproxsol} and the fact that $\lambda=1/2m$.
\end{proof}
\noindent
\noindent
\section{Computational Results}
\label{sec:computResults}

In this section, a few computational results are presented to show the performance of the D-AIPP method. The D-AIPP method is benchmarked against two other nonconvex optimization methods, namely the accelerated 
gradient (AG) method \cite{nonconv_lan16} and the accelerated 
inexact proximal point (AIPP) method recently proposed and analyzed in \cite{kong2018complexity}.

We consider the quadratic programming problem
\raggedbottom
\begin{equation}\label{testQPprob}
\min\left\{ f(z):=-\frac{\alpha_1}{2}\|DBz\|^{2}+\frac{\alpha_2}{2}\|Az-b\|^{2}:z\in\Delta_{n}\right\}
\end{equation}
where $A\in \R^{l\times n}$,  $B\in \R^{n\times n}$,  $D\in \R^{n\times n}$ is a diagonal matrix, $b\in \R^{ l}$, $(\alpha_1,\alpha_2)\in \R^2_{++}$,  and
$\Delta_{n}:=\left\{ z\in\R^n:\sum_{i=1}^{n}z_{i}=1, \;\; z_i\geq0\right\}$.
More specifically, we set the dimensions to be $(l,n)=(20,300)$.  We also generate the entries of $A,B$ and $b$ by sampling from the uniform distribution ${\cal U}[0,1]$ 
and the diagonal entries of $D$ by sampling from the discrete uniform distribution ${\cal U}\{1,1000\}$.  By appropriately choosing the scalars  $\alpha_1$ and $\alpha_2$,
the instance corresponding to a pair of parameters $(M,m)\in\R^2_{++}$
was generated so that $M=\lambda_{\max}(\nabla^{2}f)$ and $-m=\lambda_{\min}(\nabla^{2}f)$ where $\lambda_{\max}(\nabla^{2}f)$ and $\lambda_{\min}(\nabla^{2}f)$ denote
the largest and smallest eigenvalues of the Hessian of $f$ respectively.
We choose the parameters $\lambda$, $\theta$ and $\delta$ so that
$(\lambda, \theta, \theta+\delta)=(0.9/m, 0.49\xi, 0.9(M/m)^{1/7})$. The AIPP method is implemented with parameters $\sigma=0.3$ and $\lambda=0.9/m$. The AG, AIPP and D-AIPP methods are implemented in 
MATLAB 2017b scripts and are run on a MacBook Pro containing a 4-core Intel Core i7 processor and 16 GB of memory. 

All three methods use the centroid of the set
$\Delta_{n}$ as the initial point $z_0$  and are run until a pair $(z,v)$ is generated satisfying the condition
\[
v\in \nabla f(z)+N_{\Delta_n}(z), \qquad  \frac{\|v\|}{\|\nabla f(z_{0})\|+1}\leq \bar \rho\label{eq:comp_term}
\]
for a given tolerance $\bar \rho>0$. Here, $N_{X}(z)$ denotes the normal cone of $X$ at $z$, i.e. $N_X(z)=\{u\in \R^n: \langle u, \tilde z-z\rangle \leq 0,\; \forall \tilde z \in X\}.$ 
The results of the table below is obtained with $\bar \rho=10^{-7}$ and presents results for different choices of the curvature pair $(M,m)$.  Each entry  in the $\bar{f}$-column is the value of the objective function of \eqref{testQPprob}
at the last iterate generated by each method. Since they are approximately the same for all three methods, only one value is reported.
The bold numbers in each table highlight the algorithm that performed the most efficiently in terms of total number of iterations. 


\begin{table}[H]\label{tab:t1}
	\begin{centering}
		\begin{tabular}{|>{\centering}p{1.5cm}>{\centering}p{1.5cm}|>{\centering}m{2cm}|>{\centering}p{1.6cm}>{\centering}p{1.6cm}>{\centering}p{1.6cm}|}
			\hline 
			\multicolumn{2}{|c|}{Size} & \multirow{2}{2cm}{\centering{}$\bar{f}$} & \multicolumn{3}{c|}{Iteration Count}\tabularnewline
			{\small{}$M$} & {\small{}$m$} &  & {\small{}AG} & {\small{}AIPP} &  {\small{}D-AIPP}\tabularnewline
			\hline 
			{\small{}16777216} & {\small{}1048576} & {\small{}-3.83E+04} & {\small{}4429} & {\small{}6711} & \textbf{{\small{}1246}}\tabularnewline
			{\small{}16777216} & {\small{}65536} & {\small{}-4.46E+02} & {\small{}22087} & {\small{}24129} & \textbf{\small{}4920}\tabularnewline
			{\small{}16777216} & {\small{}4096} & {\small{}4.07E+03} & {\small{}26053} & {\small{}5706} & \textbf{\small{}5585}\tabularnewline
			{\small{}16777216} & {\small{}256} & {\small{}4.38E+03} & {\small{}20371} & \textbf{\small{}1625} & {\small{}2883}\tabularnewline
			{\small{}16777216} & {\small{}16} & {\small{}4.40E+03} & {\small{}20761} & \textbf{\small{}2308} & {\small{}3656}\tabularnewline
			\hline 
		\end{tabular}
		\par\end{centering}
	\caption{Numerical results for the AG,  AIPP and D-AIPP methods}
\end{table}
\vspace{-5mm}

From the table, we can conclude that if the curvature ratio $M/m$ is sufficiently small then the D-AIPP method performs fewer iterations than both the AG and the AIPP methods. 
However, for the large curvature ratio cases, D-AIPP is also competite, although not as good as the AIPP method. Nevertheless, preliminary computational results seem to indicate
that, by varying $ \lam $ and $ \delta $ adaptively, a variant of D-AIPP can also efficiently solve instances with large curvature ratios.
We leave this preliminary investigation for a future work, since this situation is not covered by the theory presented in this paper, 

\bibliographystyle{plain}
\bibliography{DAIPP.bib}

\appendix
\section{Technical results}\label{Appendix A}

This appendix presents some relations involving the two sequences of parameters $ \{a_k\} $ and $ \{A_k\} $ used both in
the GAIPP framework and the D-AIPP method.

\begin{lemma}\label{estimates}
	For every $ k\ge 0 $, the sequences $ \{a_k\} $ and $ \{A_k\} $ defined in \eqref{eq:ak} and \eqref{eq:Ak}, respectively, satisfy
	\[
	A_k\ge\frac{k^2}{4}, \quad \sum_{i=0}^{k-1}A_{i+1}\ge\frac{k^3}{12}, \quad \frac{\sum_{i=0}^{k-1}a_i}{\sum_{i=0}^{k-1}A_{i+1}}\le \frac{4}{k}.
	\]
\end{lemma}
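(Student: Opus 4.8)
The plan is to prove the three bounds in the order stated, using each one as input to the next, and to concentrate all the real work in the last one.

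For the first bound I would start from the identity $A_{k+1}=a_k^2$ (this is relation~\eqref{eq:rel-theta}) and show by induction that $a_k\ge (k+2)/2$ for all $k\ge 0$: the base case $a_0=1$ is immediate from \eqref{eq:ak}, and the inductive step follows since $a_{k+1}=\tfrac12\bigl(1+\sqrt{1+4a_k^2}\bigr)\ge \tfrac12(1+2a_k)=a_k+\tfrac12$. Combining this with $A_k=a_{k-1}^2$ gives $A_k\ge (k+1)^2/4\ge k^2/4$ for $k\ge 1$, and $A_0=0$ handles $k=0$. The second bound is then routine: plugging $A_{i+1}\ge (i+1)^2/4$ into the sum and using $\sum_{j=1}^{k}j^2=k(k+1)(2k+1)/6\ge k^3/3$ gives $\sum_{i=0}^{k-1}A_{i+1}\ge k^3/12$.

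For the third bound, the key preliminary observation is that $a_i=A_{i+1}-A_i$ by \eqref{eq:Ak}, so the numerator telescopes: $\sum_{i=0}^{k-1}a_i=A_k-A_0=A_k$. Writing $S_k:=\sum_{i=0}^{k-1}A_{i+1}$, the claim thus reduces to $kA_k\le 4S_k$ for $k\ge 1$, which I would prove by induction. The base case $k=1$ is $A_1\le 4A_1$. For the step it suffices to show $(k-3)A_{k+1}\le 4S_k$; when $k\le 3$ the left side is nonpositive and we are done, and when $k\ge 4$ the inductive hypothesis $4S_k\ge kA_k$ reduces the task to the ratio estimate $A_{k+1}/A_k\le k/(k-3)$. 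To get this, I would use the crude bound $a_k=\tfrac12\bigl(1+\sqrt{1+4A_k}\bigr)\le 1+\sqrt{A_k}$ together with the already-proved $A_k\ge k^2/4$ to obtain $A_{k+1}/A_k=1+a_k/A_k\le 1+2/k+4/k^2$, and then check the elementary inequality $1+2/k+4/k^2\le k/(k-3)$ for $k\ge 4$, which after clearing denominators is just $0\le k^2+2k+12$.

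The main obstacle is precisely this ratio estimate in the induction step: the cheap bounds $a_k\le 1+\sqrt{A_k}$ and $A_k\ge k^2/4$ are only barely strong enough, and it is the constant $4$ (rather than $3$) in the statement of the lemma that gives the slack needed for the final elementary inequality to hold for every $k\ge 4$. An alternative route would be to first establish that $A_k/k^2$ is non-increasing, via $a_k/a_{k-1}\le 1+1/k$, and then bound $S_k\ge (A_k/k^2)\sum_{j=1}^k j^2\ge kA_k/3$; but this requires essentially the same one-step estimates and is no shorter.
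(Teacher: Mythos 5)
Your proposal is correct, but for the third (and only nontrivial) inequality it takes a genuinely different route from the paper. The paper's argument is a three-line computation: since $A_{i+1}=a_i^2$ by \eqref{eq:rel-theta}, the denominator is $\sum_{i=0}^{k-1}a_i^2$, Cauchy--Schwarz gives $\sum_{i=0}^{k-1}a_i^2\ge \tfrac1k\bigl(\sum_{i=0}^{k-1}a_i\bigr)^2$, and the telescoping identity $\sum_{i=0}^{k-1}a_i=A_k$ together with $A_k\ge k^2/4$ finishes it. You use the same telescoping identity but replace Cauchy--Schwarz by an induction on the claim $kA_k\le 4S_k$, whose step rests on the ratio estimate $A_{k+1}/A_k\le 1+2/k+4/k^2\le k/(k-3)$ for $k\ge4$; I checked the algebra and it goes through (the reduction to $(k-3)A_{k+1}\le 4S_k$, the nonpositivity for $k\le3$, the bound $a_k\le 1+\sqrt{A_k}$, and the final polynomial inequality are all correct). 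The paper's route is shorter and avoids any case analysis; yours is more hands-on but has the side benefit of making explicit how tight the constant $4$ is, and it also supplies a self-contained induction for $A_k\ge k^2/4$ (via $a_k\ge(k+2)/2$) where the paper simply cites Lemma~7 of Nesterov. One cosmetic point: the third ratio is vacuous at $k=0$, so your induction correctly starts at $k=1$, matching the implicit reading of the lemma.
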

\begin{proof}
	The proof of the first inequality can be found in Lemma 7 of \cite{nesterov2012gradient}.
	The second inequality follows directly from the first one.
	Now, using \eqref{eq:Ak} and \eqref{eq:rel-theta}, the Cauchy-Schwartz inequality, and the first inequality of the lemma, we conclude that the last inequality of the lemma
holds as follows:
	\[
	\frac{\sum_{i=0}^{k-1}a_i}{\sum_{i=0}^{k-1}A_{i+1}}=\frac{\sum_{i=0}^{k-1}a_i}{\sum_{i=0}^{k-1}a_i^2}
	\le \frac{\sum_{i=0}^{k-1}a_i}{\frac1k (\sum_{i=0}^{k-1}a_i)^2}
	= \frac{k}{A_k}
	\le \frac{4}{k}.
	\]
\end{proof}
\end{document}